\def\bigtriangleup{\Delta}
\newcommand{\rrvert}{\vert}
\newcommand{\rrVert}{\Vert}
\newcommand{\llvert}{\vert}
\newcommand{\llVert}{\Vert}
\def\shuffle{}
\newtheorem{theorem}{Theorem}[section]
\newtheorem{corollary}{Corollary}[section]
\newtheorem{lemma}{Lemma}[section]
\begin{document}
\begin{frontmatter}

\title{Expected signature of Brownian motion up to the first
exit time from a bounded domain}
\runtitle{Expected signature of stopped Brownian motion}

\begin{aug}
\author[A]{\fnms{Terry}~\snm{Lyons}\corref{}\thanksref{T1}\ead[label=e1]{Terry.Lyons@maths.ox.ac.uk}}
\and
\author[A]{\fnms{Hao}~\snm{Ni}\thanksref{T2}\ead[label=e3]{ni@maths.ox.ac.uk}}
\thankstext{T1}{Supported by EPSRC (Grant Reference: EP/F029578/1)
and ERC Grant (Grant Agreement No. 291244 Esig).}
\thankstext{T2}{Supported by ERC Grant (Grant Agreement No. 291244 Esig).}
\runauthor{T. Lyons and H. Ni}
\affiliation{University of Oxford}

\address[A]{Oxford-Man Institute of Quantitative Finance\\
University of Oxford\\
Eagle House\\
Walton Well Road\\
OX2 6ED, Oxford\\
United Kingdom\\
\printead{e1}\\
\phantom{E-mail:\ }\printead*{e3}}
%
\end{aug}



\received{\smonth{5} \syear{2013}}
\revised{\smonth{6} \syear{2014}}

%
\begin{abstract}
The signature of a path provides a top down description of the path in
terms of its effects as a control [\textit{Differential Equations Driven by Rough Paths}
(2007) Springer]. The signature
transforms a path into a group-like element in the tensor algebra and
is an essential object in rough path theory. The expected signature of
a stochastic process plays a similar role to that played by the
characteristic function of a random variable. In [Chevyrev (2013)], it is
proved that under certain boundedness conditions, the expected value of
a random signature already determines the law of this random signature.
It becomes of great interest to be able to compute examples of expected
signatures and obtain the upper bounds for the decay rates of expected
signatures. For instance, the computation for Brownian motion on $
[ 0,1 ]$ leads to the ``cubature on Wiener space'' methodology
[Lyons and Victoir,
\textit{Proc. R. Soc. Lond. Ser. A Math. Phys. Eng. Sci.}
\textbf{460}
(2004)
169--198]. In this paper we fix a bounded domain
$\Gamma$ in a Euclidean space $E$ and study the expected signature of
a Brownian path starting at $z\in\Gamma$ and stopped at the first
exit time from $\Gamma$. We denote this tensor series valued function
by $\Phi_{\Gamma}(z)$ and focus on the case $E=\mathbb{R}^{d}$. We show
that $\Phi_{\Gamma}(z)$ satisfies an elliptic PDE system and a boundary
condition. The equations determining $\Phi_{\Gamma}$ can be recursively
solved; by an iterative application of Sobolev estimates we are able,
under certain smoothness and boundedness condition of the domain $\Gamma
$, to prove geometric bounds for the terms in $\Phi_{\Gamma}(z)$.
However, there is still a gap and we have not shown that $\Phi_{\Gamma
}(z)$ determines the law of the signature of this stopped Brownian
motion even if $\Gamma$ is a unit ball.
\end{abstract}

%
\begin{keyword}[class=AMS]
\kwd{60J60}
\kwd{60J65}
\kwd{60J10}
\kwd{60J35}
\kwd{47D03}
\kwd{47D07}
\kwd{35K05}
\kwd{35K08}
\kwd{35K10}
\kwd{35K51}
\end{keyword}

\begin{keyword}
\kwd{Expected signature}
\kwd{rough path}
\kwd{diffusion}
\kwd{cubature}
\end{keyword}
%
\end{frontmatter}

\section{Introduction}\label{sec1}
An essential notion in the theory of rough paths is the signature of a
path, which represents the path information in terms of its effects as
a control \cite{RoughPaths}. In \cite{Cubature}, Lyons and Victoir
developed a methodology called a \textit{cubature on the Wiener Space} to
numerically solve high dimensional SDEs and semi-elliptic PDEs, which
can be seen as an alternative to the Monte Carlo method. The
Lyons--Victoir method relies on the computation of the expected
signature of the Brownian motion up to an arbitrarily fixed time $T \in
\mathbb{R}^{+}$. Let $S(B_{[0, T]})$ denote the signature of the
Brownian motion up to time $T$. In \cite{Fawcett}, a closed-form
expression for the expected signature of the Brownian motion up to time
$T$, that is, the expectation of $S(B_{[0, T]})$, is given by the following:
%
\begin{equation}
\label{FawcettFormula} \mathbb{E} \bigl[S(B_{[0, T]}) \bigr] = \exp \Biggl(
\frac{T}{2} \sum_{i=1}^{d}e_{i}
\otimes e_{i} \Biggr),
\end{equation}
where $B$ is a standard $d$-dimensional Brownian motion with canonical basis
$(e_{1}, e_{2}, \ldots, e_{d})$ for $\mathbb{R}^{d}$.

In \cite{friz2012levy}, Fawcett's formula (\ref{FawcettFormula}) has
been extended to L\'evy processes up to fixed time. In this paper, we
consider the Brownian case when $T$ is a certain kind of the stopping
time as another extension of Fawcett's formula (\ref{FawcettFormula}).
More specifically, we consider the first exit time of the Brownian
motion $B$ from $\Gamma$, denoted by $\tau_{\Gamma}$, where $\Gamma$ is
a bounded domain in $E:=\mathbb{R}^{d}$. Denote the expected signature
of the Brownian motion $B$ starting at $z \in\Gamma$ up to $\tau
_{\Gamma}$ by $\Phi_{\Gamma}(z):= \mathbb{E}^{z}[S(B_{[0, \tau_{\Gamma
}]})]$. We prove that $\Phi_{\Gamma}$ satisfies an elliptic PDE taking
values in the tensor algebra space, with a boundary condition and an
initial condition given as follows:
\[
\cases{ %
\displaystyle\Delta \bigl(\Phi_{\Gamma}(z) \bigr)=-
\Biggl(\sum_{i=1}^{d}e_{i}\otimes
e_{i} \Biggr)\otimes \Phi _{\Gamma}(z)-2\sum
_{i=1}^{d}e_{i}\otimes\frac{\partial\Phi_{\Gamma
}(z)}{%
\partial z_{i}}, &
\quad $\mbox{$\forall z\in\Gamma$}$, \vspace*{2pt}
\cr
\displaystyle\lim_{t \uparrow\tau_{\Gamma}}
\Phi_{\Gamma}(B_{t})=\mathbf{1} \mbox{ a.s. } P^{z},
& \quad$\mbox{$\forall z\in\Gamma$}$ \vspace*{2pt}
\cr
\rho_{0} \bigl(
\Phi_{\Gamma}(z) \bigr)=1,\rho_{1} \bigl(\Phi_{\Gamma}(z)
\bigr)=0, & \quad $\mbox{$\forall z\in\bar{\Gamma}$},$}
\]
where $\rho_{n}$ is defined in Definition~\ref{rho_n_def}.

Using this PDE, we compute each term of the expected signature
recursively. In the case of the domain $\Gamma$ being a unit disk we
extend this result further, by demonstrating that the expected
signature has polynomial form with common factor $(1-|z|^{2})$. In
addition we derive a finite difference equation that characterizes the
expected signature of a simple $d$-dimensional random walk up to the
first exit time from a bounded domain.

In Section~\ref{GeoBoundsForPhi}, we study the decay rate of each term
in $\Phi_{\Gamma}$ and prove that $\Phi_{\Gamma}(z)$ is geometrically
bounded if $\Gamma$ is strongly Lipschitz and belongs to the class
$C^{m}$ where $m = \lfloor\frac{d}{2}\rfloor+1$ by using our PDE
approach in conjunction with Sobolev's theorem. This is motivated by a
series of studies on whether the law of a signature can be determined
by its expectation. The first relevant result was due to Fawcett~\cite{Fawcett}. Recently this result has been extended significantly in \cite{Ilya}, and a sufficient condition for the law of random signature
$S(X)$ to be uniquely determined by its expected value is that its
expected signature is compact-like in the sense of the second version
of the paper \cite{Ilya}, that is, for every $\delta>0$, there exists
a positive integer $N_{\delta}$, such that for every $n \geq N_{\delta}$,
\[
\bigl\Vert\rho_{n} \bigl(\mathbb{E} \bigl[S(X) \bigr] \bigr) \bigr\Vert<
\delta^{n}.
\]
However, the geometric boundedness of $\Phi_{\Gamma}$ does not imply
that $\Phi_{\Gamma}(z)$ is compact-like. It is still an open question
whether $\Phi_{\Gamma}(z)$ determines the law of the signature of the
Brownian motion up to $\tau_{\Gamma}$.

In \cite{NiHaoThesis}, a parabolic result of the expected signature of
a general time-homogenous It\^o diffusion up to fixed time $T$ is
obtained using the similar PDE approach. More specifically, let $X_{t}$
be a time-homogeneous $E$-valued It\^o diffusion satisfying the
following SDE:
\[
dX_{t} = \mu(X_{t})\,dt + V(X_{t})
\,dW_{t},
\]
where $W_{t}$ is a standard multi-dimensional Brownian motion. Let $\Phi
(T,x)$ denote the expected signature of $X$ from time $0$ to $T$. Under
some regularity assumptions of $\mu$, $V$ and $\Phi$, $\Phi$ satisfies
the following PDE:
\[
\cases{ %
\displaystyle \biggl( -\frac{\partial}{\partial T} + A \biggr)\Phi(T,x)
+ \sum^ {d} _{j = 1} \Biggl(\sum
^{d} _{j_{1} = 1} b_{j_{1}, j}(x) e_{j_{1}}
\Biggr) \otimes \frac{\partial\Phi
(T,x)}{\partial x_{j}} \vspace*{2pt}
\cr
\qquad\hspace*{-6pt} {}+
\displaystyle \Biggl( \sum^{d} _{j = 1}
\mu^{j}(x)e_{j} + \frac
{1}{2}\sum
^{d} _{j_{1} = 1} \sum^{d}
_{j_{2} = 1}b_{j_{1}, j_{2}}(x) e_{j_{1}} \otimes e_{j_{2}}
\Biggr) \otimes\Phi(T,x) = 0, \vspace*{2pt}
\cr
\Phi(0,x) = \mathbf{1},
\rho_{0} \bigl(\Phi(T,x) \bigr) = 1,}
\]
%
where $b(x) = V(x)V(x)^{T}$.

\section{Preliminary}
In this paper, we are mostly interested in describing probability
measures on paths. Despite our examples being quite specific, they can
be understood in a more wider context. For the sake of precision, we
start by
introducing some notation, making essential definitions and stating the
basic results we require. These can also be found in \cite{RoughPaths}. A
reader experienced in rough path theory might prefer to go directly to
Section~\ref{section2}.

\subsection{Tensors products}
Throughout the rest of the paper, fix $E=\mathbb{R}^{d}$ as the space in
which paths will take their values. Then $E$ has the basis $ \{
e_{1},\ldots,e_{d} \} $. Consider the successive tensor powers $%
E^{\otimes n}$ of $E$ (equipped with some tensor norm). If we think of the
elements $e_{i}$ as letters, then $E^{\otimes n}$ is spanned by the
words of
length $n$ in the letters $ \{ e_{1},\ldots,e_{d} \} $, and
can be
identified with the space of real homogeneous noncommuting polynomials of
degree $n$ in $d$ variables. We note that $E^{\otimes0}=\mathbb{R}$. In
order for our analysis to work it will be necessary to constrain the norms
we use when considering tensor products (the injective and projective norms
satisfy our constraints).

\begin{definition}
We say that the tensor powers of $E$ are endowed with an admissible
norm $|\cdot|$, if the following conditions hold:

\begin{longlist}[(1)]
\item[(1)] for each $n\geq1$, the symmetric group $S_{n}$ acts by
isometry on $E^{\otimes n}$, that is,
\[
|\sigma v|=|v|\qquad \forall v\in E^{\otimes n},\forall\sigma\in
S_{n};
\]
\item[(2)] the tensor product has norm $1$, that is, $\forall n,m\geq1$,
\[
|v\otimes w|\leq|v||w|\qquad\forall v\in E^{\otimes n},w\in E^{\otimes m}.
\]
\end{longlist}
\end{definition}

\subsection{The algebra of tensor series}

\begin{definition}
A formal $E$-tensor series is a sequence of tensors, denoted by $ (
a_{n}\in E^{\otimes n} ) _{n\in\mathbb{N}}$, which we write
$a= (
a_{0},a_{1},\ldots ) $. There are two binary operations on
$E$-tensor series, an addition $+$ and a product $\otimes$, which are defined
as follows. Let $\mathbf{a}=(a_{0},a_{1},\ldots)$ and $\mathbf{b}
=(b_{0},b_{1},\ldots)$ be two $E$-tensor series. Then we define
%
\begin{equation}
\mathbf{a}+\mathbf{b}=(a_{0}+b_{0},a_{1}+b_{1},
\ldots)
\end{equation}
and
%
\begin{equation}
\mathbf{a}\otimes\mathbf{b}=(c_{0},c_{1},\ldots),
\end{equation}
where for each $n\geq0$,
%
\begin{equation}
c_{n}=\sum_{k=0}^{n}a_{k}
\otimes b_{n-k}.
\end{equation}
The product $\mathbf{a}\otimes\mathbf{b}$ is also denoted by $\mathbf
{a}\mathbf{b}$. We use the notation $\mathbf{1}$ for the series $(1,0,\ldots)$,
and $\mathbf{0}$ for the series $(0,0,\ldots)$. If $\lambda\in\mathbb
{R}$, then we define $\lambda\mathbf{a}$ to be $(\lambda a_{0},\lambda
a_{1},\ldots)$.
\end{definition}

\begin{definition}
The space $T ( ( E )  ) $ is defined to be the vector
space of all formal $E$-tensors series.
\end{definition}

\begin{remark}
The space $T (  ( E )  ) $ with $+$ and $\otimes$
and the
action of $\mathbb{R }$ is an associative and unital algebra over
$\mathbb{R}
$. An element of $\mathbf{a}=(a_{0},a_{1},\ldots)$ of $T (  (
E )
 ) $ is invertible if and only if $a_{0}\neq0$. Its inverse is then
given by the series
%
\begin{equation}
\mathbf{a}^{-1}=\frac{1}{a_{0}}\sum_{n\geq0}
\biggl(\mathbf{1}-\frac{\mathbf
{a}}{%
a_{0}} \biggr)^{n},
\end{equation}
which is well defined because, for each given degree, only finitely many
terms of the sum produce nonzero tensors of this degree. In
particular, the
subset $\{\mathbf{a}\in T (  ( E )  ) |a_{0}=1\}$
forms a
group.
\end{remark}

\begin{definition}
The dilation operator denoted by $\delta_{\varepsilon}$ is a mapping
from $%
\mathbb{R}^{+}\times T (  ( E )  ) \rightarrow T (
 ( E )  ) $ defined by
\[
\delta_{\varepsilon}(\mathbf{a}) = \bigl(a_{0},\varepsilon
a_{1},\ldots,\varepsilon ^{n}a_{n},\ldots \bigr)
\qquad \forall\varepsilon\in\mathbb{R}^{+},\mathbf{a}\in T ( ( E )
).
\]
\end{definition}

\begin{definition}
Let $n\geq1$ be an integer. Let $B_{n}=\{\mathbf{a}%
=(a_{0},a_{1},\ldots)|a_{0}=\cdots=a_{n}=0\}$. The truncated tensor algebra $%
T^{(n)}(E)$ of order $n$ over $E$ is defined as the quotient algebra
%
\begin{equation}
T^{(n)}(E)=T ( ( E ) ) /B_{n}.
\end{equation}
The canonical homomorphism $T (  ( E )  )
\longrightarrow
T^{(n)}(E)$ is denoted by $\pi_{n}$.
\end{definition}

\subsection{Paths}
Paths also have algebraic properties:

\begin{definition}
Let $X\dvtx [r,s]\longrightarrow E$ and $Y\dvtx [s,t]\longrightarrow E$ be two
continuous paths. Their concatenation is the path $X\ast Y$ defined by
\[
(X\ast Y)_{u}=\cases{ %
X_{u}, &\quad $u\in [
r,s ],$ \vspace*{2pt}
\cr
X_{s}+Y_{u}-Y_{s}, &
\quad $u\in [ s,t ].$}
\]
\end{definition}

\begin{remark}
$\ast$ is an associative operation. Let $X\dvtx [r,s]\longrightarrow E$, $%
Y\dvtx [s,t]\longrightarrow E$ and $Z\dvtx [t,v]\longrightarrow E$ be
three continuous paths. Then
\[
(X\ast Y)\ast Z=X\ast(Y\ast Z).
\]
\end{remark}

\subsection{The signature of a path}

\begin{definition}
Let $J$ denote a compact interval. Let $X\dvtx J\longrightarrow E$ be a
path of
bounded variation or a rough path of finite $p$-variation such that the
following integration makes sense. The signature of $X$, denoted by $S(X_{J})
$, is an element $(1,X^{1},\ldots,X^{n},\ldots)$ of $T (  ( E )
 ) $ defined for each $n\geq1$ as follows:
\[
X^{n}=\mathop{\int\cdots\int}_{u_{1}<\cdots<u_{n}, u_{1},\ldots,u_{n}\in J}%
\,dX_{u_{1}} \otimes\cdots\otimes \,dX_{u_{n}}.
\]
The truncated signature of $X$ of order $n$ is denoted by $S^{n}(X)$,
that is, $%
S^{n}(X) = \pi_{n}(S(X_{J}))$, for every $n \in\mathbb{Z}$.
\end{definition}


\begin{example}
%
%
(1) If $X_{t}$ is a continuous path with finite $p$-variation,
where $1
\leq p<2$, then its signature can be defined in the sense of the Young
integral \cite{RoughPaths}. More generally, if $X_{t}$ is a $p$-rough
path ($%
p \geq1$), then the integrals will exist as a result of the general theory
of rough paths (Theorem~3.7, page 45, \cite{RoughPaths}).

(2) For a Brownian path, its signature can be defined in the
sense of the It\^{o} integral or the Stratonovich integral. There is a simple
rewriting rule
that allows one to go between them \cite{gaines1994algebra}. Almost all
Brownian paths, with their L\'evy area processes, are $p$-rough paths
for any $%
p>2$. With probability one, for all $(s,t)$ the Stratonovich signature
agrees with the canonical rough path signature (\cite{RoughPaths}, page 57).
%
\end{example}

Chen's identity is a fundamental theorem, which asserts that the signature
is a homomorphism between path space and rough path space.

\begin{theorem}\label{Chen}
Let $X\dvtx [r,s]\longrightarrow E$ and $Y\dvtx [s,t]\longrightarrow E$ be two
continuous paths with finite $p$-variation where $1 \leq p < 2$. Then
%
\begin{equation}
S(X\ast Y)=S(X)\otimes S(Y).
\end{equation}
\end{theorem}

The proof can be found in \cite{RoughPaths}.

\subsection{Real-valued functions on the signatures of paths}
We now introduce a special class of linear forms on $T (  (
E ) ) $; see \cite{RoughPaths}. Suppose $(e_{1}^{\ast
},\ldots,e_{i}^{\ast},\ldots)$ are elements of $E^{\ast}$. We can introduce
coordinate iterated integrals by setting
\[
X_{u}^{(i)}:= \bigl\langle e_{i}^{\ast},X_{u}
\bigr\rangle
\]
and rewriting $ \langle e_{i_{1}}^{\ast}\otimes\cdots\otimes
e_{i_{n}}^{\ast},S ( X )  \rangle$ as the scalar iterated
integral of coordinate projections
\[
\mathop{ \int\cdots\int}\limits
_{{ \mbox{$u_{1}
<\cdots<u_{n}\atop u_{1},\ldots,u_{n}\in J$}}}\,dX_{u_{1}}^{(i_{1})}\otimes
\cdots \otimes \,dX_{u_{n}}^{(i_{n})}.
\]
In this way, we realize $n$th degree coordinate iterated integrals as the
restrictions of linear functionals in $E^{\otimes n}$ to the space of
signatures of paths. If $(e_{1},\ldots,e_{d}) $ is a basis for a finite
dimensional space $E$, and $(e_{1}^{\ast},\ldots,e_{d}^{\ast})$ is a basis
for the dual space~$E^{\ast}$, we can write
\[
\mathbf{X}_{J}=\sum_{n=0}^{\infty}
\sum_{i_{1},\ldots, i_{n}\in\{
1,\ldots,
d\}} \biggl(\mathop{ \int\cdots\int}\limits
_{{ \mbox{$u_{1}
<\cdots<u_{n}\atop u_{1},\ldots,u_{n}\in J$}}}
\,dX_{u_{1}}^{(i_{1})} \otimes\cdots\otimes \,dX_{u_{n}}^{(i_{n})}
\biggr) e_{i_{1}} \otimes\cdots\otimes e_{i_{n}}.
\]

\begin{definition}
We define $T(E)\subset T (  ( E )  ) $ to be the tensor
series $\mathbf{a}=(a_{0},a_{1},\ldots)$ for which there exists $N$
depending on
$\mathbf{a}$ so that $a_{i}=0$ for all  $i>N$. In other words it is the space
of polynomials (instead of series) in elements of $E$.
\end{definition}

\begin{remark}
There is a natural inclusion
\[
T \bigl(E^{\ast} \bigr)\rightarrow T ( ( E ) )
^{\ast}.
\]
\end{remark}

\begin{remark}
For any $\mathbf{e}^{\ast}\in T ( E^{\ast} ) $, we denote by
$%
\mathcal{U}_{\mathbf{e}^{\ast}}$ the restriction of the linear map
$\langle
\mathbf{e}^{\ast},\cdot\rangle$ to the range of the signature. It is a
real-valued function on the set of the signature of paths. It is well known
that for any $\mathbf{e}^{\ast}$ and $\mathbf{f}^{\ast}$ in $T (
E^{\ast} ) $, the pointwise product $\mathcal{U}_{\mathbf
{e}^{\ast}}%
\mathcal{U}_{\mathbf{f}^{\ast}}$ equals $\mathcal{U}_{\mathbf{e}^{\ast
}%
\shuffle\mathbf{f}^{\ast}}$ for the shuffle product $\mathbf{e}^{\ast}
\shuffle\mathbf{f}^{\ast}\in T ( E^{\ast} ) $. In other
words any
polynomial in coordinate iterated integrals can be written as a linear
combination of iterated integrals \cite{RoughPaths}.
\end{remark}

\begin{remark}
The previous remark explains why, if we have the probability measure on the
signatures of paths, then the expected signature is a powerful piece of
information if it exists. The integral of any polynomial against this
measure can be calculated as follows. Use the shuffle product to identify
the linear functional on tensors that coincides with the polynomial on the
set of signatures. The polynomial and the linear functional have the same
integral, and this will be the contraction of the linear functional
with the
expectation of the measure (the expected signature). So the expected
signature determines the integral of the measure against any
polynomial. Of
course if the measure were compactly supported, then the polynomials are
dense in the continuous real-valued functions and the measure would be
completely determined by its expectation. This theorem has been
extended in \cite{Ilya} recently.
\end{remark}

\begin{remark}
Re-parameterizing a path inside the interval of definition does not change
its signature over the maximal interval. Translating a path does not change
its signature. We may define an equivalence relation between paths by asking
that they have the same signature.
\end{remark}

Returning to the finite dimensional example and the notation introduced
above, the linear forms $e_{I}^{\ast}$, as $I$ spans the set of finite
words in the dual letters $e_{1}^{\ast},\ldots,e_{d}^{\ast}$, form a basis
for $T(E^{\ast})$. For convenience, we fix two useful functions on
$T (
 ( E )  ) $: $\pi^{I}$ and $\rho_{n}$.

\begin{definition}
$\pi^{I}$ is defined by
\begin{eqnarray*}
\pi^{I}\dvtx T ( ( E ) ) &\rightarrow&\mathbb{R};
\\
\mathbf{a} &\mapsto& e_{I}^{\ast}(\mathbf{a}).
\end{eqnarray*}
\end{definition}

In particular,
\[
\pi^{I} \bigl(S(X) \bigr)=\mathcal{U}_{e_{I}^{\ast}} \bigl(S(X)
\bigr)= \mathop{ \int\cdots\int}\limits
_{{ \mbox{$u_{1}
<\cdots<u_{n}\atop u_{1},\ldots,u_{n}\in J$}}} \,dX_{u_{1}}^{(i_{1})}
\otimes\cdots \otimes \,dX_{u_{n}}^{(i_{n})},
\]
where $I=(i_{1},\ldots,i_{n})$. We call $\pi^{I}(S(X))$ the coordinate
signature of $X$ indexed by $I$.

\begin{definition}
\label{rho_n_def} $\rho_{n}$ is defined by
\begin{eqnarray*}
\rho_{n} \dvtx T ( ( E ) ) &\rightarrow& E^{\otimes n},
\\
(a_{0},a_{1},\ldots) &\mapsto&a_{n},
\end{eqnarray*}
where $a_{n}\in E^{\otimes n}$ and $n\in\mathbb{N}$.
\end{definition}

\subsection{The homogeneous Carnot--Caratheodory norm on the truncated
signature of a path}
In this subsection, we consider continuous paths of finite
$1$-variation and
look at their signatures truncated to order $n$. The range of this
signature map, a
subset of $T^{ ( n ) } ( E ) $, forms a group. We will
work with the Carnot--Caratheodory norm (CC norm) on this space. We
refer to
\cite{MultidimensionalStochasticProcessesAsRoughPaths} for a detailed
discussion of this norm but note that it is obvious from the definition that
the CC norm is invariant under rotation of paths. We start with the
definition of the space of continuous paths of finite $1$-variation.

\begin{definition}
Let $E$ be a Euclidean space endowed with Euclidean metric $\mathbf{d}$
and $x\dvtx [0,T]\rightarrow E$. For $0 \leq t\leq\bar{t} \leq T$, the
$1$-variation of
$x$ on $[t,\bar{t}]$ is defined as
\[
|x|_{1\mbox{-}\mathrm{var};[t,\bar{t}]} = \sup_{\mathcal
{D}=(t=u_{1}<u_{2}<\cdots<u_{n}=\bar{t}%
)} \sum
_{i=0}^{n-1} \mathbf{d}(x_{u_{i}},
x_{u_{i+1}} ).
\]
If $|x|_{1\mbox{-}\mathrm{var};[t,\bar{t}]} < \infty$, we say that $x$ is of bounded
variation or of finite $1$-variation on $[t,\bar{t}]$. The space of
continuous paths of finite $1$-variation on $[0, T]$ is denoted by $%
C^{1\mbox{-}\mathrm{var}}([0,T], E)$.
\end{definition}

\begin{definition}
The set of all signatures, truncated at order $N$, of bounded variation
paths is denoted by
\[
G^{N}(E):= \bigl\{S^{N}(x)_{0,1}\dvtx x\in
C^{1\mbox{-}\mathrm{var}} \bigl([0,1],E \bigr) \bigr\}.
\]
\end{definition}

The group $G^{N}(E)$ admits a number of equivalent metric structures.
Two will be important in this paper. The so-called {\it
Carnot--Caratheodory norm} is defined (and is finite) for every $g \in
G^{N}(E)$ as
\[
\llVert g\rrVert =\inf \bigl\{ \Vert \gamma\Vert _{1\mbox{-}\mathrm{var}}\dvtx
S^{ N } ( \gamma ) =g, \gamma\in C^{1\mbox{-}\mathrm{var}} \bigl([0,1], E \bigr)
\bigr\}.
\]
%
A second ``norm'' can be built directly out of the signature
\[
|\!|\!| g |\!|\!|:=\max_{i = 1,\ldots,N} \bigl|\rho_{i}(g
)\bigr|^{1/i}\qquad \forall g \in G^{N}(E).
\]
Both ``norms'' are homogeneous of degree 1 in scaling $\delta
_{\varepsilon}$. The group is finite dimensional. Therefore $\llVert
g\rrVert $ and $|\!|\!| g |\!|\!|$ are equivalent (page 11, \cite{{MultidimensionalStochasticProcessesAsRoughPaths}}).

\begin{definition}
The $p$-variation of a geometric rough path $\gamma$ defined on $ [
0,T %
 ] $, and written $\Vert S^{N}(\gamma)\Vert _{p\mbox{-}\mathrm{var};[0,T]}$, is defined on
paths in $G^{N }( E) $ to be
\[
\bigl\Vert S^{N }(\gamma) \bigr\Vert _{p\mbox{-}\mathrm{var};[0,T]}=\sup_{\mathcal{D}%
=(0=u_{1}<u_{2}<\cdots<u_{n}=T)}
\biggl( \sum_{\mathcal{D}} \bigl\llVert S^{N}(
\gamma)_{u_{i},u_{i+1}} \bigr\rrVert ^{p} \biggr) ^{1/p},
\]
where $\llVert \cdot \rrVert $ is the Carnot--Caratheodory norm.
\end{definition}

\begin{lemma}
\label{EquivalenceOf Homogeneous norms}
There is a constant $C$ depending on $d$ and $p$ such that for any path
$\gamma$ of bounded variation,\setcounter{footnote}{2}\footnote{$\llVert \cdot\rrVert $ and
$|\!|\!|\cdot|\!|\!|$ are different.}
\begin{eqnarray*}
\biggl\llvert \int_{0<u_{1}<\cdots<u_{N}<T}\,d\gamma_{u_{1}}\otimes
\cdots \otimes \,d\gamma_{u_{N}} \biggr\rrvert ^{1/N} &\leq& \bigl|\!
\bigl| \! \bigl| S^{N}(\gamma)\bigr |\! \bigr|\! \bigr| \leq C \bigl\llVert S^{N}(
\gamma) \bigr\rrVert
\\
&\leq& C \bigl\llVert S^{N}(\gamma) \bigr\rrVert _{p\mbox{-}\mathrm{var};[0,T]}.
\end{eqnarray*}
\end{lemma}

\subsection{Sobolev space}

\begin{definition}
Let $u$ be a locally integrable function in $\Gamma$ and $\alpha$ be a
multi-index. Then a locally integrable function $r_{\alpha} u$ such
that for every $g \in C_{c}^{\infty}(G)$,
\[
\int_{\Gamma} g(x) r_{\alpha}(x) \,dx =
(-1)^{\vert\alpha\vert} \int_{\Gamma} D^{\alpha} g(x) u(x)
\,dx,
\]
will be called the weak derivatives of $u$, and $r_{\alpha}$ is denoted
by $D^{\alpha} u$. By convention, $D^{\alpha}u = u$ if $\vert\alpha
\vert= 0$.
\end{definition}

Throughout the rest of discussion in this subsection, $\Gamma$ is a
subset of $E: = \mathbb{R}^{d}$.

\begin{definition}
The Sobolev space is defined to be the set of all $\mathbb{R}^{\tilde
{d}}$-valued functions $u \in L^{p}(\Gamma)$ such that for every
multi-index $\alpha$ with $\vert\alpha\vert\leq k$, the weak partial
derivative $D_{\alpha}u$ belongs to $L^{p}(\Gamma)$, that is,
\[
W^{k,p}(\Gamma) = \bigl\{ u \in L^p(\Gamma) \dvtx
D^{\alpha}u \in L^p(\Gamma)\ \forall|\alpha| \leq k \bigr\},
\]
where $k \in\mathbb{N}$, and $\Gamma$ is an open set.\vadjust{\goodbreak}

The Sobolev norm is defined as follows:\vspace*{-2pt}
\[
\Vert u \Vert_{W^{k,p}(\Gamma)} = \sum_{j=1}^{\tilde{d}}
\biggl(\sum_{\vert\alpha\vert\leq k} \int_{\Gamma}
\bigl\vert D_{\alpha} u^{j}(x) \bigr\vert^{p}\,dx
\biggr)^{1/p}.
\]
When $k=0$, $\Vert u \Vert_{W^{k,p}(\Gamma)}$ is also
$L^{p}(\Gamma)$-norm, that is,\vspace*{-1pt}
\[
\Vert u \Vert_{W^{k,p}(\Gamma)} = \Vert u \Vert _{L^{p}(\Gamma)}.
\]
\end{definition}

\begin{definition}
A domain $\Gamma\subset E$ is said to be strongly Lipschitz if and
only if $\Gamma$ is bounded and each point $x_{0}$ of $\partial\Gamma$
is in a neighborhood $\mathcal{R}$ which is the image under a rotation
and translation of axes of a domain $\{x:=(x_{1} ,\ldots, x_{d})\dvtx \vert
x_{d}'\vert< R, \vert x_{d}\vert< 2LR\}$ in which $x_{0}$ corresponds
to the origin, $\mathcal{R} \cap\partial\Gamma$ corresponds to the
locus $x_{d} = f(x_{d}')$ where $x_{d}' = (x_{1}, \ldots, x_{d-1})$, and
$f$ satisfies the Lipschitz condition with constant $L$ as well as\vspace*{-1pt}
\[
f(\mathcal{R} \cap\Gamma) = \bigl\{x\dvtx \bigl\vert x_{d}'
\bigr\vert<R, f \bigl(x_{d}' \bigr) < x_{d} < 2LR
\bigr\}.
\]
\end{definition}

Let us introduce one useful theorem, which states how the $L^{\infty}$
norm of $u$ can be controlled in terms of $L^{p}$ norm of the weak
derivatives of $u$ up to some degree under certain regularity condition
of a domain $\Gamma$; see Theorem~3.5.1 in~\cite{morrey2008multiple}.\vspace*{-2pt}

%
\begin{theorem}\label{SobEmbThml}
Suppose $u \in W^{m,p}(\Gamma)$ where $\Gamma$ is strongly Lipschitz in
$E$ and $m > d/p$. Then $u$ is continuous on $\Gamma$ and there is a
constant $C(d,m,p,\Gamma)$ such that
\[
\bigl\vert u(x) \bigr\vert\leq C \vert\Gamma\vert^{-1/p} \Biggl\{ \sum
_{j =
0}^{m-1} \frac{R^{j}}{j!} \bigl\Vert
\!\bigtriangledown^{j} u \!\bigr\Vert _{L^{p}(\Gamma)}+(m-v/p)^{-1}
\frac{R^{m}}{(m-1)!} \bigl\Vert\! \bigtriangledown^{m} u\! \bigr\Vert_{L^{p}(\Gamma)}
\Biggr\},
\]
where $R:=R(\Gamma)$ is the diameter of $\Gamma$ and $u$ can be a
vector valued function.\vspace*{-2pt}
\end{theorem}

%
\begin{lemma}\label{SobEmbThm}
Let $u\dvtx \Gamma\rightarrow\mathbb{R}^{\tilde{d}}$ where $\Gamma$ is a
strongly Lipschitz domain in $E$. Let $m=\lfloor d/2\rfloor+1$ and $u
\in W^{m,2}(\Gamma)$. Then $u$ is continuous on $\Gamma$ and there is a
constant $C:= C(d,\Gamma)$ such that\vspace*{-2pt}
\[
\bigl\vert u(x) \bigr\vert\leq C \vert\Gamma\vert^{-1/2} \max \biggl\{\max
^{m-1}_{j=0} \biggl( \frac{R^{j}}{j!} \biggr),
\frac
{R^{m}}{(m-1)!} \biggr\} \Vert u\Vert_{W^{m,2}(\Gamma)},
\]
where $R$ is the diameter of $\Gamma$.\vspace*{-2pt}
\end{lemma}

\begin{pf}
This is just a special case when $p = 2$, $m = \lfloor d/2\rfloor+1$ in
Theorem~\ref{SobEmbThml}. The proof is completed by noticing that\vspace*{-2pt}
\[
\sum_{j=0}^{m} \bigl\Vert\! \bigtriangledown^{j}
u \!\bigr\Vert_{2}^{0} \leq m \Vert u \Vert_{W^{m,2}(\Gamma)}.
\]\vspace*{-2pt}
\upqed\end{pf}

Then we will introduce an auxiliary result about boundary regularity
for solutions of linear elliptic equations \cite{jost1998partielle}.
Let $\mathbb{D}(z, r)$ denote an open $d$-dimensional ball centered at
$z$ with
radius $r$, and for simplicity let $\mathbb{D}$ denote the
$d$-dimensional open unit ball centered
at the origin. To state it precisely, we introduce the definition of a
domain of the class $C^{k}$.

\begin{definition}
An open and bounded set $\Gamma\subset E$ is of class $C^{k}(k = 0, 1,
\ldots, \infty)$ if for any $x_{0} \in\partial\Gamma$ there exist
$r>0$ and a bijective map $F\dvtx\break \mathbb{D}(x_{0},r) \subset\mathbb
{R}^{d}$ with the following properties:
\begin{longlist}[(1)]
\item[(1)] $F( \Gamma\cap\mathbb{D}(x_{0},r)) \subset\{(x_{1}, \ldots, x_{d})\dvtx x_{d} \geq0\}$.
\item[(2)] $F( \partial\Gamma\cap\mathbb{D}(x_{0},r)) \subset\{
(x_{1}, \ldots, x_{d})\dvtx x_{d} = 0\}$.
\item[(3)] $F$ and $F^{-1}$ are of class $C^{k}$.
\end{longlist}

%
\begin{remark}
This means that $\partial\Gamma$ is a $(d-1)$-dimensional $C^{k}$
submanifold of $\mathbb{R}^{d}$.
\end{remark}
\end{definition}

Let us consider the following class of elliptic differential equations:
\[
Mu := \sum_{i,j} \frac{\partial}{\partial x^{j}}
\biggl(a^{i,j}(x) \frac
{\partial}{\partial x^{i}}u(x) \biggr),
\]
where $a^{i,j}$ satisfies the ellipticity condition, namely that, there
exists some $\lambda>0$, with
\[
\sum_{i,j=1}^{d} a^{ij}(x)
\xi_{i}\xi_{j} \geq\lambda\vert\xi\vert ^{2}\qquad
\forall x \in\Gamma, \xi\in\mathbb{R}^{d}.
\]

%
\begin{theorem}\label{boundaryRegularity}
Let $u$ be a weak solution of
\begin{eqnarray*}
Mu &=& f(x) \in\Gamma,
\\
u - g &\in& H_{0}^{1,2}(\Gamma).
\end{eqnarray*}
%
Suppose that the ellipticity condition holds. Let $f \in W^{k,2}(\Gamma
)$, $g \in W^{k+2,2}(\Gamma)$. Let $\Gamma$ be of class $C^{k+2}$, and
let the coefficients of $M$ be of class $C^{k+1}(\bar{\Gamma})$. Then
\[
\Vert u \Vert_{W^{k+2,2}(\Gamma)} \leq c \bigl( \Vert f\Vert_{W^{k,2}(\Gamma)}+\Vert g
\Vert_{W^{k+2,2}(\Gamma
)} \bigr)
\]
with $c$ depending on $\lambda, d$, $\Gamma$ and on the $C^{k+1}$-norms
for the $a^{i,j}$.
\end{theorem}

\begin{pf}
The proof of Lemma~8.3.3 is given on page 207 in \cite{jost1998partielle}.
\end{pf}

\section{Expected signature of planar Brownian motion up to the first
exit time from a bounded domain}\label{section2}
Recall that $E=\mathbb{R}^{d}$, and so it has a canonical basis
$(e_{1}, \ldots, e_{d})$. Let $ ( B_{t} ) _{t\geq0}$ denote a
standard $d$-dimensional Brownian motion on $E$ under a probability
space $(\Omega, P^{z}, \mathcal{F})$ with its canonical filtration
$\mathcal{F} =  (\mathcal{F}_{t} )_{t\geq0}$ where
$P^{x}(B_{0} = z) = 1$ and $z \in E$.

\begin{definition}
Let $\Gamma$ be a domain (a connected open subset) in $E$. Then $\tau
_{\Gamma}=\inf\{t\geq0\dvtx B_{t}\in\Gamma^{c}\}$ is the first exit
time of
Brownian motion from $\Gamma$.
\end{definition}

The (Stratonovich) signature is defined for almost every Brownian path
$B$, and for all pairs of times $(s, t)$. We are interested in the
random signature $S(B_{[0, \tau_{\Gamma}]})$ of the Brownian path up to
the first exit time from the domain $\Gamma$.

\begin{definition}[(Expected signature of Brownian motion)]  Assume
$\tau_{\Gamma} < \infty$ a.s. and the componentwise integrability
of $S ( B_{ [ 0,\tau_{\Gamma} ] } )$. We denote by
$\Phi_{\Gamma}(z)$ the expected signature of Brownian motion starting
at $z$ and stopped upon the first exit time $\tau_{\Gamma}$ from a
domain $\Gamma$, that is,
%
\begin{equation}
\Phi_{\Gamma}(z)=\mathbb{E}^{z} \bigl[S(B_{  [ 0,\tau_{\Gamma} ] })
\bigr].
\end{equation}
\end{definition}


\begin{remark}
For the case of Brownian motion in the upper half plane $\mathbb{H}$,
the projection $\pi^{(1)}$ of the signature at the exit time onto the
horizontal axis has a Cauchy distribution, and therefore does not have
finite expectation. Similarly the projection $\pi^{(1, 1)}$ is positive
and has a $\frac{1}{2}$-stable distribution and so more obviously does
not have finite expectation.
\end{remark}

%

In order to discuss the expected signature of Brownian motion further, we
introduce an auxiliary function $\Psi$ mapping $E$ to $T((E))$.

\begin{definition}
We denote by $\Psi(z)$ the expected signature of a Brownian motion started
at $0$, run until it leaves the ball $\mathbb{D}(0, |z|)$, and
conditioned to exit the ball at the point $z$. In formula,
\[
\Psi(z)=\mathbb{E}^{0} \bigl[S(B_{ [ 0,\tau_{\mathbb{D}(0, |z|)} ]
})|B_{\tau_{\mathbb{D}(0, |z|)}}=z
\bigr].
\]
\end{definition}

In particular, $\Psi(0) = \mathbf{1} = (1, 0, 0, \ldots)$. The function
$\Psi$ plays an important role in the rest of the paper. In part, its
importance comes from the strong Markov property. In Lemma~\ref
{Lemma_Psi_smooth}, we easily prove that $\Psi$ is well defined, and a
smooth function of $z$. A more serious challenge is to show that the
boundedness of the domain $\Gamma$ can guarantee the existence and
twice differentiability of $\Phi_{\Gamma}$. Then we will be in the
position to derive a PDE which characterizes $\Phi_{\Gamma}$.

In the case of the disk, which is clearly a bounded domain, the PDE is
so explicit that one
can use it to identify the solution as a combination of polynomials in an
explicit manner. The solution to this PDE contains an enormous amount
of information about Brownian motion in the disk, and, for example, it
easily gives the moments of the L\'evy area for Brownian motion stopped
at the first exit time from the disk in a quite explicit form; see \cite{NiHaoThesis}.

\subsection{The componentwise smoothness of \texorpdfstring{$\Phi_{\Gamma}$}{$Phi_{Gamma}$}}

We start by proving that the signature of Brownian motion upon its
first exit time $\tau_{\Gamma}$ from a bounded domain $\Gamma$ has
finite expectation, and so $\Phi_{\Gamma}$ is well defined. We will
then go on to discuss the componentwise smoothness of $\Phi_{\Gamma}$.
The expectation can be controlled by using extension of Lepingle's BDG
inequality obtained in \cite{BDG}.

\begin{definition}
We say that $F\dvtx \mathbb{R}^{+} \rightarrow\mathbb{R}^{+}$ is moderate if:
\begin{longlist}[(1)]
\item[(1)] $F$ is continuous and increasing;
\item[(2)] $F(x)=0$ if and only if $x=0$;
\item[(3)] for some (and thus for every $\alpha>1$)
\[
\sup_{x > 0}\frac{F(\alpha x)}{F(x)}< \infty.
\]
\end{longlist}
\end{definition}

%
\begin{theorem}
\label{Thm_estimate_es} Let $M$ be a continuous, $\mathbb{R}^{d}$-valued
local martingale starting at 0 and $S^{n}(M)$ be the truncated
Stratonovich signature of $M$ up to level $n$ viewed as a path in
$T^{(n)}(\mathbb{R}^{d})$. Then for any moderate function $F$, there
exists $%
C_{i}=C_{i}(n,F,d,|\cdot|)$ for $i = 1, 2$ such that for all stopping times
$\tau$,
\begin{eqnarray*}
&&\mathbb{E} \biggl(F \biggl( \biggl|\int_{0<u_{1}<u_{2}<\cdots<u_{n}<\tau} \circ
dM_{u_{1}}\circ\cdots \circ dM_{u_{n}} \biggr|^{1/n}
\biggr) \biggr)
\\
&&\qquad\leq C_{1}\mathbb{E} \bigl(F \bigl( \bigl\Vert S^{n}(M)
\bigr\Vert _{p\mbox{-}\mathrm{var};[0,\tau]} \bigr) \bigr)
\\
&&\qquad\leq C_{2}\mathbb{E} \bigl( F \bigl( \bigl|\langle M
\rangle_{\tau} \bigr|^{
{1}/{2}%
} \bigr) \bigr).
\end{eqnarray*}
\end{theorem}

The proof can be found in \cite{BDG}.

\begin{corollary}
\label{boundness} Let $\Gamma$ be a bounded domain. Then for every $n
\in
\mathbb{N}^{+}$, there exists a constant $C=C(n)$ such that for every
$z \in\Gamma$,
\begin{eqnarray*}
&&\mathbb{E}^{z} \biggl[ \biggl\llvert {%
\int\cdots
\int}_{0<u_{1}<u_{2}<\cdots<u_{n}<\tau
_{\Gamma}}\circ dB_{u_{1}}\circ\cdots\circ
dB_{u_{n}} \biggr\rrvert \biggr]
\\
&& \qquad\leq C(n)\mathbb{E}^{z} \bigl[\bigl\Vert S^{n}(B)\bigr\Vert
_{p\mbox{-}\mathrm{var};[0,\tau]}^{n} \bigr]
\\
&& \qquad\leq C(n)\sup_{z \in\Gamma} \bigl\{\mathbb{E}^{z}
\bigl[ \tau_{\Gamma}^{n/2} \bigr] \bigr\} < +\infty.
\end{eqnarray*}
\end{corollary}

\begin{pf}
Applying Theorem~\ref{Thm_estimate_es} to the case with $M_{t}=B_{t}$
and $%
F(x)=x^{n}$, we get
\begin{eqnarray*}
\mathbb{E}^{z} \biggl[ \biggl\llvert {%
\int\cdots
\int}_{0<t_{1}<t_{2}<\cdots<t_{n}<\tau_{\Gamma
}}\circ dB_{t_{1}}\circ\cdots\circ
dB_{t_{n}} \biggr\rrvert \biggr] &\leq& C(n)\mathbb{E}^{z}
\bigl[ \bigl\Vert S^{n}(B) \bigr\Vert _{p\mbox{-}\mathrm{var};[0,\tau]}^{n} \bigr]
\\
&\leq& C(n)\mathbb{E}^{z} \bigl[\tau_{\Gamma}^{n/2}
\bigr].
\end{eqnarray*}
The only thing we need to show that is ${\sup}_{z \in\Gamma}\{
\mathbb{E}^{z}[\tau_{\Gamma}^{n/2}]\}
< +\infty$. It can be shown that there exists a positive number $\alpha
> 0$
such that
\[
\mathbb{E}^{0} \bigl[e^{\alpha\tau_{\mathbb{D}}} \bigr] < + \infty,
\]
and due to the Brownian scaling property we have
\[
\forall r>0\qquad \mathbb{E}^{0} \bigl[e^{\alpha\tau_{\mathbb{D}}} \bigr]=
\mathbb {E}^{0} \bigl[e^{%
{\alpha}/{r^{2}}\tau_{\mathbb{D}(0,r)}} \bigr].
\]
Since $\Gamma$ is bounded, there exists a positive constant $R >0$ such that
$\Gamma\subseteq\mathbb{D}(0, R)$. Thus $0 \leq\tau_{\Gamma} \leq
\tau_{
\mathbb{D}(0, R)}$. This implies that
\[
\mathbb{E}^{z} \bigl[e^{{\alpha}/{(4R^{2})}\tau_{\Gamma}} \bigr]\leq\mathbb
{E}^{z} \bigl[e^{%
{\alpha}/{(4R^{2})}\tau_{\mathbb{D}(z, 2R)}} \bigr] = \mathbb {E}^{0}
\bigl[e^{{%
\alpha}/{(4R^{2})}\tau_{\mathbb{D}(0, 2R)}} \bigr]= \mathbb{E}^{0} \bigl[e^{\alpha\tau
_{%
\mathbb{D}}}
\bigr] < \infty.
\]
Thus $\tau_{\Gamma}$ has finite moments, since for every $z \in\Gamma$,
\[
\frac{1}{n!} \biggl(\frac{\alpha}{4R^{2}} \biggr)^{n}
\mathbb{E}^{z} \bigl[\tau _{\Gamma}^{n} \bigr]\leq
\mathbb{E}^{z} \bigl[e^{{\alpha}/{(4R^{2})}\tau_{\Gamma
}} \bigr]\leq\mathbb{E}^{0}
\bigl[e^{\alpha\tau_{
\mathbb{D}}} \bigr] < \infty.
\]
Then it follows that
\[
\forall n \in\mathbb{N}\qquad\sup_{z \in\Gamma} \bigl\{
\mathbb{E}^{z} \bigl[\tau _{\Gamma}^{n} \bigr] \bigr\}
< \infty
\]
and
\[
\sup_{z \in\Gamma} \bigl\{\mathbb{E}^{z} \bigl[
\tau_{\Gamma}^{n/2} \bigr] \bigr\} \leq\sqrt{ \sup
_{z \in\Gamma} \bigl\{\mathbb{E}%
^{z} \bigl[
\tau_{\Gamma}^{n} \bigr]} \bigr\} < \infty.
\]
Now our proof is complete.
\end{pf}

In the rest of this subsection, we are going to discuss the smoothness
of $%
\Phi_{\Gamma}$ in componentwise sense providing that $\Psi$ is a smooth
function, which is proved later in Lemma~\ref{Lemma_Psi_smooth}.

\begin{theorem}\label{Phi_theorem}
Suppose that $\Gamma$ is a nonempty bounded domain in $E$. Then the following
statements are true:
\begin{longlist}[(1)]
\item[(1)] $\Phi_{\Gamma}$ is a well defined function and moreover $\Phi
_{\Gamma}\in L^{1}$;
\item[(2)] $\Phi_{\Gamma}$ is twice differentiable in componentwise
sense, that is,
for all index $I$, $\pi^{I}\circ\Phi_{\Gamma}$ is twice differentiable.
\end{longlist}
\end{theorem}

\begin{pf}
We start with proving the first statement. By Corollary~\ref
{boundness}, it is easy to see that $\Phi_{\Gamma}$ is well defined
and uniformly bounded in $\Gamma$, since for every index~$I$,
there exists a constant $C>0$, such that
\[
\sup_{z \in\Gamma} \bigl\vert\pi^{I} \bigl(
\Phi_{\Gamma}(z) \bigr) \bigr\vert\leq \sup_{z
\in\Gamma}
\mathbb{E}^{z} \bigl[ \bigl\llvert \pi^{I}
\bigl(S(B_{[0, \tau_{\Gamma}]}) \bigr) \bigr\rrvert \bigr]< C.
\]
Obviously $\Phi_{\Gamma}$ is a measurable function. Furthermore, $\pi
^{I}(\Phi_{\Gamma})$ has a compact support. Hence for every index $I$,
\[
\int_{\Gamma} \bigl|\pi^{I} \bigl(\Phi_{\Gamma}(z)
\bigr) \bigr|\,dz \leq C A(\Gamma)<\infty,
\]
where $A(\Gamma)$ is the volume of $\Gamma$. So $\Phi_{\Gamma} \in
L^{1}$. Thus the proof of the first statement is complete. Now the only
thing left to prove is that $\Phi_{\Gamma}$ is twice differentiable.

For every $\varepsilon> 0$, let $\Gamma_{\varepsilon}=\{z \in
\Gamma| \operatorname{dist}(z,\partial\Gamma)>\varepsilon\}$. The Markov property of
the expected signature of
Brownian motion, which is described in detail in Lemma~\ref
{Mean_value_property_lemma}, ensures that for every $z \in\Gamma
_{\varepsilon} $,
\[
\Phi_{\Gamma}(z) = \frac{1}{d\omega_{d}r^{d-1}} \int_{\partial\mathbb
{D}(0, r)}
\Psi(y)\otimes\Phi_{\Gamma}(z+y)\,d\sigma(y),
\]
where $r<\frac{\varepsilon}{2}$, $\omega_{d}$ is the volume of the unit
ball, and $\sigma(y)$ is the $d-1$-dimensional surface measure.

Then this implies that 
\begin{equation}
\label{Eq_6} \Phi_{\Gamma}(z) = \int_{0}^{\infty}
\biggl(\frac{1}{d\omega_{d}r^{d-1}} \int_{\partial\mathbb{D}(0, r)}\Psi(y)\otimes
\Phi_{\Gamma}(z+y)\,d\sigma(y) \biggr)K_{\varepsilon}(r)\,dr,
\end{equation}
for a smooth distribution $K_{\varepsilon}(r)$ with compact support
$[0,\frac{%
\varepsilon}{2}]$. Let $F_{\varepsilon}$ be a map from $E$ to $T((E))$
defined by
\[
F_\varepsilon(z)=\Psi(-z)K_{\varepsilon} \bigl(|z| \bigr).
\]
Rewriting (\ref{Eq_6}) we have
\[
\Phi_{\Gamma}(z)=\int_{\Gamma}F_{\varepsilon}(z-y)
\otimes\Phi_{\Gamma}(y)\,dy = F_{\varepsilon}*\Phi_{\Gamma}(z),
\]
where $*$ is the convolution. Since $\Psi$ is smooth and $K_{\varepsilon
}$ is a smooth
function with compact support, $F_{\varepsilon}$ is a smooth function
with compact support. It is easy to show that
\[
\Vert F_{\varepsilon}\Vert _{L^{1}}+\Vert\! \bigtriangledown
F_{\varepsilon}\!\Vert _{L^{1}}+\Vert \bigtriangleup F_{\varepsilon
}
\Vert _{L^{1}}<+\infty.
\]
On the other hand, $\Phi_{\Gamma}\in L^{1}$ as well, and so we have
\[
\Vert F_{\varepsilon}*\Phi_{\Gamma}\Vert _{L^{1}}+\Vert\!
\bigtriangledown F_{\varepsilon}*\Phi_{\Gamma}\!\Vert _{L^{1}}+
\Vert \bigtriangleup F_{\varepsilon}*\Phi_{\Gamma}\Vert
_{L^{1}}<+\infty.
\]
Thus $F_{\varepsilon}*\Phi_{\Gamma}$ is twice differentiable, since $%
F_{\varepsilon}*\Phi_{\Gamma}, (\bigtriangledown
F_{\varepsilon})*\Phi_{\Gamma}\mbox{ and } (\bigtriangleup
F_{\varepsilon})*\Phi_{\Gamma}\in L^{1}$. Moreover, because $\Gamma=
{\bigcup}_{\varepsilon> 0}\Gamma_{\varepsilon}$, $\Phi_{\Gamma
}$ is
smooth on $\Gamma$.
\end{pf}

\begin{remark}
Actually following this proof we can show that $\Phi_{\Gamma}$ is
infinitely differentiable in a bounded domain. Alternatively, since we
have proved that $\Phi_{\Gamma}$ is twice differentiable so that our
PDE provides the integral representation for $\Phi_{\Gamma}$, this also
implies that $\Phi_{\Gamma}$ is infinitely differentiable.
\end{remark}

\subsection{\texorpdfstring{\spaceskip=0.7em plus 0.05em minus 0.02em Properties of the
expected signature of stopped Brownian motion}{{Properties of the
expected signature of stopped Brownian motion}}}
\mbox{Throughout} this subsection, suppose that $\Gamma$ is a bounded domain.
In view of Theorem~\ref{Phi_theorem} $\Phi_{\Gamma}(z)$ is well defined
and twice differentiable. Moreover $\Phi_{\Gamma}(z)$ has the following
properties.

\subsubsection{Translation invariance}

\begin{lemma}[(Translation invariance)]
Let $z + \Gamma= \{z+y \vert y \in\Gamma\}$. Then
\begin{eqnarray*}
\mathbb{E}^{z} \bigl[S(B_{[0,\tau_{z+\Gamma}]}) \bigr] &=&
\mathbb{E}^{0} \bigl[S ( B_{%
 [ 0,\tau_{\Gamma} ] } ) \bigr],
\\
\mathbb{E}^{z} \bigl[S(B_{[0,\tau_{z+\Gamma}]})|B_{\tau_{z+\Gamma}}=z+a \bigr]
&=&%
\mathbb{E}^{0} \bigl[S(B_{[0,\tau_{\Gamma}]})|B_{\tau_{\Gamma}}=a
\bigr],
\end{eqnarray*}
where $a\in\partial\Gamma$.
\end{lemma}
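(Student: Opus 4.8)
The plan is to reduce both identities to the translation invariance of Brownian motion together with the invariance of the signature under translation of the underlying path, which is already recorded in the Remark following the discussion of coordinate iterated integrals. The key observation is that if $\left(B_t\right)_{t\geq 0}$ is a standard planar Brownian motion started at $z$ under $\mathbb{E}^z$, then $\left(B_t - z\right)_{t\geq 0}$ is a standard Brownian motion started at $0$, distributed as under $\mathbb{E}^0$. So the first step is to set $\widetilde{B}_t := B_t - z$ and verify that the exit time $\tau_{z+\Gamma}$ of $B$ from $z+\Gamma$ coincides pathwise with the exit time $\tau_{\Gamma}$ of $\widetilde{B}$ from $\Gamma$; indeed $B_t \in z+\Gamma$ if and only if $\widetilde{B}_t = B_t - z \in \Gamma$, so the two stopping rules fire at exactly the same instant.

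Granting this, the second step is to compare the signatures. The path $B$ on $\left[0,\tau_{z+\Gamma}\right]$ is the translate by the constant $z$ of the path $\widetilde{B}$ on $\left[0,\tau_{\Gamma}\right]$. Since the Stratonovich iterated integrals defining $S$ involve only the increments $\circ\, dB_{u} = \circ\, d\widetilde{B}_{u}$, translating the path by a constant leaves every tensor level unchanged, i.e. $S\left(B_{\left[0,\tau_{z+\Gamma}\right]}\right) = S\left(\widetilde{B}_{\left[0,\tau_{\Gamma}\right]}\right)$ almost surely. Taking expectations and using that $\widetilde{B}$ under $\mathbb{E}^z$ has the same law as $B$ under $\mathbb{E}^0$ yields
\begin{equation*}
\mathbb{E}^{z}\left[S\left(B_{\left[0,\tau_{z+\Gamma}\right]}\right)\right] = \mathbb{E}^{z}\left[S\left(\widetilde{B}_{\left[0,\tau_{\Gamma}\right]}\right)\right] = \mathbb{E}^{0}\left[S\left(B_{\left[0,\tau_{\Gamma}\right]}\right)\right],
\end{equation*}
which is the first identity.

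For the second, conditional, identity the same substitution $\widetilde{B}_t = B_t - z$ is used, and now the third step is to track the conditioning event under the shift: the exit location satisfies $B_{\tau_{z+\Gamma}} = z+a$ if and only if $\widetilde{B}_{\tau_{\Gamma}} = a$. Because the signatures agree pathwise and the conditioning sets correspond under a measure-preserving bijection of path space, the conditional expectations coincide termwise, giving the second equality for every boundary point $a\in\partial\Gamma$. I expect no serious obstacle here; the only point deserving care is the measure-theoretic justification that conditioning commutes with the shift, which follows because the map $B\mapsto B - z$ is a bijection carrying the law $\mathbb{E}^z$ to $\mathbb{E}^0$ and carrying the event $\{B_{\tau_{z+\Gamma}}=z+a\}$ to $\{\widetilde{B}_{\tau_{\Gamma}}=a\}$; one may phrase this cleanly via a regular conditional distribution or by testing against bounded measurable functions of the exit point. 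Everything else is the elementary translation invariance already available from the Remark on signatures.
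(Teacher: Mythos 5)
The paper states this lemma without proof, treating it as immediate from the translation invariance of Brownian motion together with the Remark that translating a path does not change its signature. Your argument is correct and supplies exactly that implicit reasoning --- the pathwise identification of the exit times under the shift $B \mapsto B - z$, the invariance of the Stratonovich iterated integrals under translation, and the correspondence of the conditioning events $\{B_{\tau_{z+\Gamma}} = z+a\}$ and $\{\widetilde{B}_{\tau_{\Gamma}} = a\}$ --- so it matches the approach the paper takes for granted, with the added (and appropriate) care about regular conditional distributions for the second identity.
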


\subsubsection{Scaling property}

\begin{lemma}[(Scaling property)]
For $n\in\mathbb{N}\mbox{ and }\varepsilon\in[0,+\infty)$, we have that
\begin{eqnarray*}
\Phi_{\varepsilon\mathbb{D}}(0) &=&\delta_{\varepsilon} \bigl(\Phi_{\mathbb
{D}%
}(0)
\bigr),\qquad \mbox{that is, }\rho_{n} \bigl(\Phi_{\varepsilon\mathbb{D}}(0)
\bigr) = \varepsilon ^{n}\rho _{n} \bigl(\Phi_{\mathbb{D}}(0)
\bigr);
\\
\Psi(\varepsilon) &=& \delta_{\varepsilon} \bigl(\Psi(1) \bigr), \qquad
\mbox{that is, }\rho_{n} \bigl(\Psi(\varepsilon) \bigr) =
\varepsilon^{n}\rho_{n} \bigl(\Psi(1) \bigr).
\end{eqnarray*}
\end{lemma}

\subsubsection{Rotation property}
Let $O(d)$ denote the rotation group in $d$ dimensions. Let us
introduce the rotation operator $\bar{\delta}_{\theta}$ on $T((E))$,
where $\theta\in O(d)$. The rotation operator $\bar{\delta}_{\theta}$
is defined as follows: for every $a = (a_{0}, a_{1}, \ldots)\in T((E))$,
\[
\bar{\delta}_{\theta}(a) = \bigl(a_{0}, \theta
a_{1}, \ldots, \theta^{\otimes
n} a_{n}, \ldots \bigr),
\]
where $\cdot^{\otimes\cdot}$ is the Kronecker power, and $bc$ is the
matrix multiplication of $b$ and $c$.

\begin{lemma}[(Rotation property)]
\label{rotation_lemma}
For every fixed $x \in\partial\mathbb{D}$ and some neighborhood $U$
contained in $\partial\mathbb{D}$ around $x$, there is a smooth map
$u$ from $U$ into $O(d)$ so that for every $y \in U$, $u(y)e_{1} = y$. Then
\[
\Psi(y) = \bar{\delta}_{u(y)} \bigl(\Psi(e_{1}) \bigr).
\]
%
\end{lemma}

\begin{pf}
Let $B_{t}$ be the Brownian motion starting at the origin and stopped for
the first exit time of the unit disk on condition that the exit point
is $%
e_{1}$. Then for every $y \in U$, $\tilde{B}_{t} = u(y)B_{t}$ is the
Brownian motion
starting at the origin and stopped at the first exit time from the unit disk
on condition that the exit point is~$y$. It is obvious that $\tilde{B}_{t}$
is just a linear transformation of $B_{t}$. 
Then immediately it follows that for every $y \in U$,
\[
\rho^{n} \bigl(\Psi(y) \bigr) = u(y)^{\otimes n}\rho^{n}
\bigl(\Psi(e_{1}) \bigr)
\]
or equivalently
\[
\Psi(y) = \bar{\delta}_{u(y)} \bigl(\Psi(e_{1}) \bigr).
\]
\upqed\end{pf}

\subsubsection{An application of the strong Markov property}

\begin{lemma}[(Strong Markov property)]
For every $\varepsilon>0$ such that\break \mbox{$\mathbb{D}(z, \varepsilon
)\subseteq
\Gamma$},
%
\begin{eqnarray}
\label{Eq_Markovian_property_1}\qquad \Phi_{\Gamma}(z)&=& \frac{1 }{d \omega_{d} \varepsilon^{d-1}}
\nonumber
\\[-8pt]
\\[-8pt]
\nonumber
&&{}\times\int_{\partial%
\mathbb{D}(0, \varepsilon)} \mathbb{E}^{z}
\bigl[S(B_{[0,\tau_{
\mathbb{D}(z, \varepsilon)}]})|B_{\tau_{ \mathbb{D}(z, \varepsilon
)}}=z+y \bigr] \otimes\Phi
_{\Gamma}(z+y)\,d\sigma(y),
\end{eqnarray}
where $\partial\mathbb{D}(0, \varepsilon)$ denotes the boundary of
$\mathbb{D}(0, \varepsilon)$, $\omega_{d}$ is the volume of
$d$-dimensional unit ball and $\sigma$ is the $d-1$-dimensional surface measure.
\end{lemma}

\begin{pf}
By Chen's identity (Theorem~\ref{Chen}) and the fact that $S(B_{[0,\tau_{
\mathbb{D}(z,\varepsilon)}]})$ is $\mathcal{F}_{\tau_{\mathbb{D}%
(z,\varepsilon)}}$-measurable, we obtain
\[
\mathbb{E}^{z} \bigl[S(B_{[0,\tau_{\Gamma}]})|\mathcal{F}_{\tau_{\mathbb
{D}(z,\varepsilon)}}
\bigr] = S(B_{[0,\tau_{z+\varepsilon\mathbb{D}}]})\otimes \mathbb{E} \bigl[S(B_{[\tau_{\mathbb{D}(z,\varepsilon)},\tau_{\Gamma
}]})|
\mathcal{F}_{\tau_{\mathbb{D}(z,\varepsilon)}} \bigr],
\]
where $\mathbb{D}(z,\varepsilon) \subseteq\Gamma$ and $\mathcal{F}$ is
defined as before, that is, the filtration generated by the Brownian motion.

As it is known that $B$ has the strong Markov property,
\[
\mathbb{E}^{z} \bigl[S(B_{[\tau_{\mathbb{D}(z,\varepsilon)},\tau_{\Gamma
}]})|\mathcal{F}%
_{\tau_{\mathbb{D}(z,\varepsilon)}}
\bigr] = \mathbb{E}^{z} \bigl[S(B_{[\tau
_{\mathbb{D}%
(z,\varepsilon)},\tau_{\Gamma}]})|B_{\tau_{\mathbb{D}(z,\varepsilon)}}
\bigr] = \Phi_{\Gamma}(B_{\tau_{\mathbb{D}(z,\varepsilon)}}),
\]
which implies that
%
\begin{equation}
\label{Eq_1} \mathbb{E}^{z} \bigl[S(B_{[0,\tau_{\Gamma}]})|
\mathcal{F}_{\tau_{\mathbb{D}%
(z,\varepsilon)}} \bigr] = S(B_{[0,\tau_{\mathbb{D}(z,\varepsilon)}]})\otimes
\Phi_{\Gamma}(B_{\tau_{\mathbb{D}(z,\varepsilon)}}).
\end{equation}
By the tower property, we have
%
\begin{equation}
\Phi_{\Gamma}(z) = \mathbb{E}^{z} \bigl[\mathbb{E}^{z}
\bigl[S(B_{[0,\tau_{\Gamma
}]} )|%
\mathcal{F}_{\tau_{\mathbb{D}(z,\varepsilon)}} \bigr] \bigr].
\end{equation}
Substituting equation (\ref{Eq_1}) into it, we obtain
\begin{eqnarray*}
\Phi_{\Gamma}(z) &=& \mathbb{E}^{z} \bigl[S(B_{[0, \tau_{z+\varepsilon\mathbb
{D}%
}]})
\otimes\Phi_{\Gamma}(B_{\tau_{\mathbb{D}(z,\varepsilon)}}) \bigr]
\\
&=&\frac{1 }{d \omega_{d} \varepsilon^{d-1}}
\\
&&{}\times\int_{\partial\mathbb{D}(0,
\varepsilon)}\mathbb{E}^{z}
\bigl[S(B_{[0,\tau_{z+\varepsilon\mathbb{D}%
}]})|B_{\tau_{z+\varepsilon\mathbb{D}}}=z+y \bigr]\otimes
\Phi_{\Gamma
}(z+y)\,d \sigma(y).
\end{eqnarray*}
\upqed\end{pf}

\subsubsection{The smoothness of \texorpdfstring{${\Psi}$}{${Psi}$}}

In this subsection, we focus on the discussion of the componentwise
smoothness of $\Psi$. We start with the proof of the finiteness of $\Psi
(e_{1})$, and then show that the function $\Psi$ is smooth. We end with
the derivation of the first two gradings of $\Psi$, which is important
for us to derive the PDE for $\Phi$ later.

\begin{lemma}
$\Psi(e_{1})$ is well defined.
\end{lemma}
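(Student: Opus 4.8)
The statement that $\Psi(1)$ is well defined means precisely that each coordinate $\pi^{I}(\Psi(1))$ is a finite real number, so that the conditional expectation converges term by term in $T((\mathbb{R}^{2}))$. The plan is to deduce this finiteness from the already-established unconditional finiteness over the disk (Corollary \ref{boundness}) together with the rotational symmetry of the problem, thereby sidestepping any delicate analysis of the conditioning on the measure-zero event $\{B_{\tau_{\mathbb{D}}}=1\}$.

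First I would record that, because path space is Polish and the exit map $\omega\mapsto B_{\tau_{\mathbb{D}}}(\omega)$ is measurable, a regular conditional distribution of the Brownian path given its exit point exists; this is what gives meaning to $\Psi(e^{i\theta})$ as a conditional expectation. Next, since Brownian motion started at the centre of the disk is rotationally invariant, its exit point is uniformly distributed on $\partial\mathbb{D}$. Disintegrating the path measure over the exit point then yields, for each index $I=(i_{1},\dots,i_{n})$ and with $X^{I}$ the associated iterated Stratonovich integral,
\begin{equation*}
\mathbb{E}^{0}\big[\,|X^{I}|\,\big]=\int_{0}^{2\pi}\mathbb{E}^{0}\big[\,|X^{I}|\ \big|\ B_{\tau_{\mathbb{D}}}=e^{i\theta}\,\big]\frac{d\theta}{2\pi}.
\end{equation*}
The left-hand side is finite by Corollary \ref{boundness} (the unit disk is a bounded domain, and the coordinate $|X^{I}|$ is controlled by the level-$n$ norm $|\rho_{n}(S(B))|$ that is bounded there). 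Since the integrand is non-negative and integrable in $\theta$, it is finite for almost every $\theta$; intersecting the countably many full-measure sets, one per index $I$, produces a single exit angle $\theta_{0}$ at which $\mathbb{E}^{0}[\,|X^{I}|\mid B_{\tau_{\mathbb{D}}}=e^{i\theta_{0}}\,]<+\infty$ simultaneously for every $I$.

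It then remains to transport finiteness from the exit point $e^{i\theta_{0}}$ to the exit point $1$. Here I would reuse the rotational covariance already exploited in the Rotation property: pathwise, the Brownian motion conditioned to exit at $e^{i\theta_{0}}$ is $\Theta(\theta_{0})$ applied to the motion conditioned to exit at $1$, and multilinearity of iterated integrals under a linear map gives the pathwise identity $X^{I}(\Theta(\theta_{0})B)=\sum_{J}\big(\prod_{k=1}^{n}\Theta(\theta_{0})_{i_{k}j_{k}}\big)X^{J}(B)$. Inverting the orthogonal transformation $\Theta(\theta_{0})^{\otimes n}$ on each level and then taking absolute values and conditional expectations, each $\mathbb{E}^{0}[\,|X^{J}|\mid B_{\tau_{\mathbb{D}}}=1\,]$ is bounded by a finite linear combination of the quantities $\mathbb{E}^{0}[\,|X^{I}|\mid B_{\tau_{\mathbb{D}}}=e^{i\theta_{0}}\,]$, which are finite by the previous step. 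Hence every $\pi^{J}(\Psi(1))$ is finite and $\Psi(1)$ is well defined.

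The main obstacle to making this rigorous is the measure-theoretic bookkeeping around the measure-zero conditioning: one must justify the disintegration identity for the non-negative functional $|X^{I}|$ and confirm that the regular conditional law given exit at $e^{i\theta_{0}}$ is genuinely the $\Theta(\theta_{0})$-image of the law given exit at $1$, rather than merely formally so. Once this rotational covariance of the conditional laws is established at the level of laws (using only the rotational invariance of Brownian motion, and not any prior finiteness, so that no circularity with the Rotation property arises), the remainder is the elementary linear-algebra inversion above.
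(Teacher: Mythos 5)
Your proposal is correct in outline and reaches the same conclusion by a genuinely different mechanism than the paper. The paper's proof works pointwise at the exit point $1$: it bounds each coordinate iterated integral by (a power of) the Carnot--Caratheodory/$p$-variation norm of the truncated signature, observes that this norm is a \emph{rotation-invariant} functional of the path, so its conditional expectation given the exit point coincides with its unconditional expectation, and then controls that by moments of the exit time via the Victoir--Lepingle Burkholder-type estimate, yielding the explicit bound $|\pi_n(\Psi(1))|\leq C\,\mathbb{E}^{0}[\tau_{\mathbb{D}}^{N/2}]<\infty$. You instead use rotation invariance at the level of \emph{laws}: disintegrate the unconditional expectation (finite by Corollary on boundedness, which itself rests on the same BDG-type theorem, so the underlying estimate is not avoided, only repackaged) over the uniformly distributed exit angle, extract a single good angle $\theta_{0}$ by a countable intersection of full-measure sets, and transport finiteness from $e^{i\theta_{0}}$ to $1$ by the rotational covariance of the conditional laws together with multilinearity of iterated integrals. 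What the paper's route buys is a uniform, quantitative bound valid directly at the point $1$ with no selection argument; what your route buys is independence from the Carnot--Caratheodory machinery (equivalence of homogeneous norms, rotation invariance of the CC norm), replacing it with soft measure theory. Both arguments share the same irreducible measure-theoretic obligation, which you correctly flag and the paper passes over silently: conditioning on the null event $\{B_{\tau_{\mathbb{D}}}=1\}$ only makes sense after fixing the rotation-covariant version of the regular conditional distribution, and that covariance must be established from rotational invariance of Brownian motion alone (which it can be, for nonnegative functionals taking values in $[0,\infty]$, so no circularity with the Rotation property arises). With that point made explicit, your argument is complete.
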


\begin{pf}
Let $\tilde{B}_{t}$ be a $d$-dimensional Brownian motion starting at
zero in the unit ball. Let $M_{t}=\tilde{B}_{\tau_{\mathbb{D}}\wedge
t}$ be the stopped process. It is obvious from BDG inequality that the
exit time $\tau_{\mathbb{D}}$ of $\tilde{B}_{t}$ from the ball (or any
bounded set) has finite moments of all orders.

By the theorem of the equivalence of homogeneous norms on $G^{N}(E)$
(Lem\-ma~\ref{EquivalenceOf Homogeneous norms}), we have
\[
\biggl\llvert \int_{0<u_{1}<u_{2}<\cdots<u_{N}<\tau_{\mathbb{D}} }\circ dM_{u_{1}}\circ
\cdots \circ dM_{u_{N}} \biggr\rrvert \leq C \bigl\llVert
S^{N}(M) \bigr\rrVert _{p\mbox{-}\mathrm{var};[0,\tau_{\mathbb{D}}]}^{N}.
\]
Thus it holds that
\begin{eqnarray*}
&&\mathbb{E}^{0} \biggl[ \biggl\llvert \int_{0<u_{1}<u_{2}<\cdots<u_{N}<\tau
}
\circ dM_{u_{1}}\circ\cdots\circ dM_{u_{N}} \biggr\rrvert \Big|
M_{\tau_{\mathbb
{D}}}=e_1%
 \biggr]
\\
&&\qquad\leq C \mathbb{E}^{0} \bigl[ \bigl\llVert S^{N }(M)
\bigr\rrVert _{p\mbox{-}\mathrm{var};[0,\tau_{\mathbb{D}}]}^{N}| M_{\tau_{\mathbb{D}}}=e_{1}
\bigr].
\end{eqnarray*}
By the definition of the Carnot--Caratheodory norm, it is obvious that
for every $\theta\in O(d)$
\[
\bigl\Vert S^{N}(\theta M) \bigr\Vert _{p\mbox{-}\mathrm{var};[0,\tau]}
\]
does not depend on $\theta$. Therefore
\[
\mathbb{E}^{0} \bigl[ \bigl\llVert S^{N }(M) \bigr\rrVert
_{p\mbox{-}\mathrm{var};[0,\tau_{%
\mathbb{D}}]}^{N}| M_{\tau_{\mathbb{D}}}=e_{1} \bigr] =
\mathbb {E}^{0} \bigl[%
 \bigl\llVert S^{N }(M)
\bigr\rrVert _{p\mbox{-}\mathrm{var};[0,\tau_{\mathbb
{D}}]}^{N} \bigr],
\]
because there is nothing significant in choosing the exit point to be $e_{1}$
providing the norm used is invariant under rotation; using extension of
Lepingle's BDG inequality obtained in \cite{BDG} one can control the $p$-variation of a martingale in terms of its bracket process, and so there
exists a positive constant\vadjust{\goodbreak} $C > 0$ such that
\[
\mathbb{E}^{0} \bigl[ \bigl\Vert S^{N}(M) \bigr\Vert
_{p\mbox{-}\mathrm{var};[0,\tau_{\mathbb
{D}}]}^{N} \bigr] \leq C \mathbb{E}^{0} \bigl[
\tau_{\mathbb{D}}^{{N}/{2}} \bigr].
\]
Finally we have that there exists a constant $C \geq0$ such that
\begin{eqnarray*}
\bigl|\rho_{N} \bigl(\Psi(1) \bigr) \bigr|&=& \biggl\llvert
\mathbb{E}^{0} \biggl[ \int_{0<u_{1}<u_{2}<\cdots<u_{N}<\tau_{\mathbb{D}}}\circ
dM_{u_{1}}\circ \cdots\circ dM_{u_{N}} \Big\vert
M_{\tau_{\mathbb{D}}}=1 \biggr] \biggr\rrvert
\\
&\leq& \mathbb{E}^{0} \biggl[ \biggl\llvert \int_{0<u_{1}<u_{2}<\cdots<u_{N}<\tau_{%
\mathbb{D}}}
\circ dM_{u_{1}}\circ\cdots\circ dM_{u_{N}} \biggr\rrvert \Big|
M_{\tau_{%
\mathbb{D}}}=1 \biggr]
\\
&\leq& C \mathbb{E}^{0} \bigl[\tau_{\mathbb{D}}^{{N}/{2}}
\bigr]<+\infty.
\end{eqnarray*}
\upqed\end{pf}

\begin{lemma}
\label{Lemma_Psi_smooth} $\Psi$ is a componentwise smooth function in
$E\setminus\{0\}$,
that is, for every index $I$, $\pi^{I}(\Psi)$ is a smooth function.
\end{lemma}

\begin{pf}
We recall the definition of $\Psi(x)$. It is clear that if $\alpha>0$
and $\theta$ is a rotation, then
\begin{eqnarray*}
\Psi \bigl(\theta(\alpha x) \bigr) &=& \delta_{\alpha} \bigl(\Psi(\theta
x) \bigr)
\\
&=& \delta_{\alpha} \bigl(\bar{\delta}_{\theta} \bigl(\Psi(x)
\bigr) \bigr).
\end{eqnarray*}
In particular the map $(\alpha, \theta)\mapsto\Psi(\theta(\alpha x))$
is smooth and defined on $\mathbb{R}^{+} \times O(d)$ to $T((E))$. Fix
$x$ on the sphere and some small neighborhood $U$ contained in
$\partial\mathbb{D}$ around $x$ chosen so that there is a smooth map
$u$ from $U$ into $O(d)$ so that for every $y \in U$, $u(y)(e_{1}) =
y$. We observe that on $\mathbb{R}^{+} \times U$,
\[
\Psi(\alpha y) = \delta_{\alpha} \bigl(\bar{\delta}_{u(y)} \bigl(
\Psi(e_{1}) \bigr) \bigr).
\]
As the right-hand side is the composition of two smooth maps $\delta_{\alpha}$ and
$\bar{\delta}_{u(x)}$, the left-hand side must be smooth on $\mathbb{R}^{+} \times
U$. Since $x$ is an arbitrary point in the sphere, we have proved the result.
\end{pf}


\begin{lemma}
\label{Psi_remark} $\Psi$ is a twice continuously differentiable
function in $E$ in componentwise sense.
Moreover,
\begin{eqnarray*}
\pi_{2} \bigl(\Psi(z) \bigr)&=&1+\sum_{i=1}^{d}z_{i}e_{i}+
\sum_{i=1}^{d}\frac{1}{2}
z_{i}^{2}e_{i}\otimes
e_{i},
\\
\frac{\partial\Psi(0)}{\partial z_{i}} &=&e_{i} \qquad\mbox{for }i=1, 2, \ldots, d\quad
\mbox{and}
\\
\Delta\Psi(0) &=&\sum_{i=1}^{d}e_{i}
\otimes e_{i}.
\end{eqnarray*}
\end{lemma}

\begin{pf}
The term of tensor degree one in $\Psi$ is the expected increment of the
conditioned path, so it is not random and $\rho_{1}(\Psi(z))=z$. The
expectation of the L\'evy area of the Brownian motion conditioned on
leaving the
disk at\vadjust{\goodbreak} any particular point equals to zero due to the symmetry of the
Brownian motion. Thus the term $\rho_{2}(\Psi(z))$ only contains the
symmetric ``increment squared'' part, thus we have
\[
\pi_{2} \bigl(\Psi(z) \bigr)=1+\sum_{i=1}^{d}z_{i}e_{i}+
\sum_{i=1}^{d}\frac{1}{2}%
z_{i}^{2}e_{i}
\otimes e_{i}.
\]
This implies that $\pi_{2}(\Psi(z))$ is smooth in $E$, and moreover it
follows that
\begin{eqnarray*}
\frac{\partial\pi_{2}(\Psi(0))}{\partial z_{i}} &=&e_{i}\qquad\mbox{ for }i=1, 2, \ldots, d\quad
\mbox{and}
\\
\Delta\pi_{2} \bigl(\Psi(0) \bigr) &=&\sum
_{i=1}^{d}e_{i} \otimes e_{i}.
\end{eqnarray*}
Now let us focus on $\rho_{n} \circ\Psi$ for any $n \geq3$. By Lemma~\ref{Lemma_Psi_smooth}, $\Psi$ is smooth in $E\setminus\{0 \}$; the
scaling property of $\Psi$ can guarantee the smoothness of $\Psi$ at
the origin, which we explain in more detail as follows. By the
definition of $\Psi$, $\Psi(0) = \mathbf{1}$. By the scaling property
of $\rho_{n}\circ\Psi$, it is easy to verify that for every integer $n
\geq3$,
\begin{eqnarray*}
\frac{\partial\rho_{n}(\Psi(0))}{\partial z_{i}} &=& \lim_{\alpha
\rightarrow0} \frac{\rho_{n}(\Psi(\alpha e_{i}))-\rho_{n}(\Psi
(0))}{\alpha} = \lim
_{\alpha\rightarrow0} \alpha^{n-1}\rho_{n} \bigl(\Psi
(e_{i}) \bigr) = 0;
\\
\frac{\partial\rho_{n}(\Psi(z_{i}e_{i}))}{\partial z_{i}} &=& \frac
{\partial z_{i}^{n}\rho_{n}(\Psi(e_{i}))}{\partial z_{i}} = n z_{i}^{n-1}
\rho_{n} \bigl(\Psi(e_{i}) \bigr)\qquad\forall z_{i}
\in(-1, 1);
\\
\frac{\partial^{2} \rho_{n}(\Psi(0))}{\partial z_{i}^{2}} &=&
\lim_{\alpha\rightarrow0}\frac{1}{\alpha} \biggl(
\frac{\partial\rho
_{n}(\Psi(\alpha e_{i}))}{\partial z_{i}}-
 \frac{\partial\rho_{n}(\Psi
(0))}{\partial z_{i}} \biggr)
\\
&=& \lim_{\alpha\rightarrow0} n \alpha^{n-2}\rho_{n}
\bigl(\Psi(e_{i}) \bigr) = 0.
\end{eqnarray*}
%
Thus it follows that for every integer $n \geq3$,
\[
\Delta\rho_{n} \bigl(\Psi(0) \bigr) = 0.
\]
The proof is complete.
\end{pf}

\subsection{The PDE for the expected signature of multi-dimensional
Brownian motion up to the first exit time from a bounded domain}

In this section, our goal is to derive the PDE for the expected
signature of
Brownian motion up to the first exit time from a bounded domain.

\begin{lemma}
\label{Mean_value_property_lemma} For every $z \in\Gamma$ and every $%
\varepsilon>0$ sufficiently small such that $\mathbb{D}(z, \varepsilon)
\subset\Gamma$, it holds that 
\begin{equation}
\Phi_{\Gamma}(z)= \frac{1}{d \omega_{d}\varepsilon^{d-1}}\int_{\partial{
\mathbb{D}(0, \varepsilon)}}\Psi(y)
\otimes\Phi_{\Gamma} (z+y)\,d\sigma(y),
\end{equation}
where $\omega_{d}$ is the volume of the $d$-dimensional unit ball, and $
\sigma$ is the $(d-1)$-dimensional surface measure.
\end{lemma}

\begin{pf}
From the strong Markov property, we have
%
\begin{eqnarray}
\label{Eq_Markovian_property} \Phi_{\Gamma}(z) &=&\frac{1}{d \omega_{d}\varepsilon^{d-1}}
\nonumber
\\[-8pt]
\\[-8pt]
\nonumber
&&{}\times\int_{\partial{%
\mathbb{D}(0, \varepsilon)}}\mathbb{E}^{z}
\bigl[S(B_{[0,\tau_{\mathbb{D}%
(z,\varepsilon)}]} )|B_{\tau_{\mathbb{D}(z,\varepsilon)}}=z+y \bigr]\otimes
\Phi_{\Gamma}(z+y)\,d\sigma(y).
\end{eqnarray}
By the translation invariance property, substituting
\[
\Psi(y) = \mathbb{E}^{z} \bigl[S(B_{[0,\tau_{\mathbb{D}(z,\varepsilon)}]}
)|B_{\tau_{\mathbb{D}(z,\varepsilon)}}=z+y \bigr]
\]
into (\ref{Eq_Markovian_property}), we obtain that
\[
\Phi_{\Gamma}(z) = \frac{1}{d \omega_{d}\varepsilon^{d-1}}\int_{\partial
{%
\mathbb{D}(0, \varepsilon)}}
\Psi(y)\otimes\Phi_{\Gamma} (z+y)\,d\sigma(y),
\]
where $\varepsilon$ is sufficiently small such that $\mathbb{D}(z,
\varepsilon)%
\subseteq\Gamma$.
\end{pf}

\begin{lemma}\label{boundaryCondition}
If $\Gamma$ is a bounded domain, then
\[
\lim_{t \uparrow\tau_{\Gamma}}\Phi_{\Gamma}(B_{t}) =
\mathbf{1} \qquad\mbox{a.s. } P^{z}, z \in\Gamma.
\]
\end{lemma}

\begin{pf}
Fix $z \in\Gamma$, and let $B_{t}$ be the Brownian motion starting at
$z$-defined on $(\Omega, P^{z}, (\mathcal{F}_{t}), \mathcal{F})$. Let
$N_{t}: = \mathbb{E}^{z}[S(B_{[0, \tau_{\Gamma}]}) \vert\mathcal{F}_{t
\wedge\tau_{\Gamma}}]$. By Corollary~\ref{boundness}, the boundedness
of $\Gamma$ ensures that $S(B_{[0, \tau_{\Gamma}]})$ is
$L_{1}$-integrable, and it implies that $N_{t}$ is a uniformly
integrable martingale and by the martingale convergence theorem $\lim_{t \uparrow\infty} N_{t}$ exists a.s. and in $L^{1}$. Since $\Gamma$
is bounded, $\mathbb{E}[\tau_{\Gamma}] < \infty$, and thus $\tau_{\Gamma
}$ is finite a.s. in $P^{z}$. Therefore it follows that
\[
\lim_{t \uparrow\infty} N _{t} = \lim_{t \uparrow\tau_{\Gamma}}
\mathbb{E}^{z} \bigl[S(B_{[0, \tau_{\Gamma}]}) \vert\mathcal{F}_{t}
\bigr].
\]
Let $\{D_{k}\}$ be an increasing sequence of open sets such that $D_{k}
\subset\subset\Gamma$ and $\Gamma= \bigcup_{k} D_{k}$. Let $\tau_{k}:=
\tau_{D_{k}}$, and let $\mathcal{F}_{k}$ denote the $\sigma$-algebra
generated by the Brownian motion up to $\tau_{k}$. Since $\tau_{k}$ is
finite a.s., $\lim_{k \uparrow\infty}\tau_{k} = \tau_{\Gamma}$. Let
$\mathcal{F}_{\infty}$ be the $\sigma$-algebra generated by $\{\mathcal
{F}_{k}\}_{\tau_{k} \geq0}$. Since $S(B_{[0, \tau_{\Gamma}]})$ is
$L_{1}$-integrable, then $M_{k} := N _{\tau_{k}} $ is a discrete
martingale. By the martingale convergence theorem, it holds both a.s.
and in $L^{p}(P^{z})$ for every $p > 0$ that
%
\begin{equation}
\label{martingaleInequality1} \lim_{k \rightarrow\infty} M_{k} =
\mathbb{E}^{z} \bigl[S(B_{[0, \tau_{\Gamma
}]}) \vert\mathcal{F}_{\infty}
\bigr] = S(B_{[0, \tau_{\Gamma}]}).
\end{equation}
Since $\tau_{k} \uparrow\tau_{\Gamma}$ a.s., and $\lim_{t \uparrow
\infty} N _{t}$ exists, then $\lim_{t \uparrow\infty} N _{t}$ must
coincide with $\lim_{k\uparrow\infty} N _{\tau_{k}} = S(B_{[0, \tau
_{\Gamma}]})$, that is,
%
\begin{equation}
\label{eqn32} \lim_{t \uparrow\tau_{\Gamma}} \mathbb{E}^{z}
\bigl[S(B_{[0, \tau_{\Gamma
}]}) \vert\mathcal{F}_{t} \bigr] =
S(B_{[0, \tau_{\Gamma}]})\qquad \mbox{a.s.}
\end{equation}
Recall that $\Phi_{\Gamma}(z):= \mathbb{E}^{z}[S(B_{[0, \tau_{\Gamma
}]})]$. By the strong Markov property of Brownian motion, the
multiplicativity of the signatures and $L^{1}$ integrability of the
signature of the stopped Brownian motion, we have that
%
\begin{equation}
\label{eqn31} \mathbb{E}^{z} \bigl[S(B_{[0, \tau_{\Gamma}]}) \vert
\mathcal{F}_{t} \bigr] = S(B_{[0, t]}) \otimes
\Phi_{\Gamma}(B_{t})\qquad \mbox{if }t < \tau_{\Gamma}.
\end{equation}
%
Substituting (\ref{eqn31}) into (\ref{eqn32}), we obtain that
\[
\lim_{t \uparrow\tau_{\Gamma}} S(B_{[0, t]}) \otimes\Phi_{\Gamma
}(B_{t})
= S(B_{[0, \tau_{\Gamma}]}) \qquad\mbox{a.s. } P^{x}, x \in\Gamma.
\]
Due to the continuity of the signature with respect to time, it is
obvious that
%
\begin{equation}
\label{eqn13} \lim_{t \uparrow\tau_{\Gamma}} S(B_{[0, t]}) =
S(B_{[0, \tau_{\Gamma
}]})\qquad \mbox{a.s. } P^{x}, x \in\Gamma.
\end{equation}
Since $S(B_{[0, t]})$ is invertible and it is nothing else but the
signature of the Brownian motion running backward from time $t$ to $0$,
(\ref{eqn13}) implies that
%
\begin{equation}
\label{eqn14} \lim_{t \uparrow\tau_{\Gamma}} \bigl(S(B_{[0, t]})
\bigr)^{-1} = \bigl(S(B_{[0, \tau
_{\Gamma}]}) \bigr)^{-1}\qquad
\mbox{a.s. } P^{x}, x \in\Gamma.
\end{equation}
Combining (\ref{eqn13}) and (\ref{eqn14}), we obtain that
\begin{eqnarray*}
\lim_{t\uparrow\tau_{\Gamma}} \Phi_{\Gamma}(B_{t}) &=& \lim
_{t
\uparrow\tau_{\Gamma}} \bigl(S(B_{[0, t]}) \bigr)^{-1} \otimes
\lim_{t \uparrow
\tau_{\Gamma}} S(B_{[0, t]}) \otimes\Phi_{\Gamma}(B_{t})
\\
&=& \bigl(S(B_{[0, \tau_{\Gamma}]}) \bigr)^{-1} \otimes
S(B_{[0, \tau_{\Gamma}]})= \mathbf{1}.
\end{eqnarray*}
Now the proof is complete.
\end{pf}

%

%
\begin{theorem}[(A PDE for the expected signature of Brownian motion)]\label{PDE_theorem}
Assume $\Gamma$ is a bounded domain. Then $\Phi_{\Gamma}$ satisfies the
following PDE:
%
\begin{equation}
\qquad \bigtriangleup\Phi_{\Gamma}(z)=- \Biggl(\sum
_{i=1}^{d}e_{i}\otimes e_{i}
\Biggr)\otimes \Phi_{\Gamma}(z)-2\sum_{i=1}^{d}
\biggl( e_{i}\otimes\frac{\partial
\Phi
_{\Gamma}(z)}{\partial z_{i}} \biggr)\qquad \mbox{if }z\in
\Gamma,
\end{equation}
with the boundary condition that for every $z \in\Gamma$,
%
\begin{equation}
\lim_{t \uparrow\tau_{\Gamma}}\Phi_{\Gamma}(B_{t})=\mathbf{1}
\qquad \mbox{a.s. in } P^{z},
\end{equation}
and the initial condition:
%
\begin{eqnarray}
\rho_{0} \bigl(\Phi_{\Gamma}(z) \bigr) &=&1 \qquad\mbox{if }z
\in \bar{ \Gamma},
\\
\rho_{1} \bigl(\Phi_{\Gamma}(z) \bigr) &=&0 \qquad\mbox{if }z
\in \bar{ \Gamma}.
\end{eqnarray}
\end{theorem}

\begin{pf}
For any fixed $z\in\Gamma$, let us consider a function
\begin{eqnarray*}
\varphi\dvtx \bar{\varepsilon} \mathbb{D} &\rightarrow&T ( ( E ) ),
\\
y &\mapsto&\Psi(y)\otimes\Phi_{\Gamma}(z+y),
\end{eqnarray*}
where $\bar{\varepsilon} =\operatorname{dist}(z,\partial\Gamma)$.

By Lemma~\ref{Mean_value_property_lemma}, for every $\varepsilon< \bar{
\varepsilon}$,
\[
\Phi_{\Gamma}(z)=\frac{1}{d \omega_{d}\varepsilon^{d-1}}\int_{\partial
{%
\mathbb{D}(0, \varepsilon)}}\Psi(y)
\otimes\Phi_{\Gamma} (z+y)\,d\sigma(y)
\]
for every $z\in\Gamma$.

This implies that $\varphi$ satisfies the mean value property at $0$,
that is,
%
\begin{equation}
\varphi(0)=\Psi(0)\otimes\Phi_{\Gamma} (z)=\Phi_{\Gamma} (z)=
\frac{1}{d
\omega_{d}\varepsilon^{d-1}}\int_{\partial{\mathbb{D}(0, \varepsilon)}%
}\varphi(y)\,d\sigma(y),
\end{equation}
for every $\varepsilon\leq\bar{\varepsilon}$.

Since $\Gamma$ is a bounded domain, and in Lemma~\ref
{Lemma_Psi_smooth} we prove that $\Phi_{\Gamma}$ is a well defined
and twice differentiable function in the
componentwise sense, we see that so is $\varphi$. By the mean value
property and
differentiability of $\varphi$ at point $0$ we immediately have that
\[
\Delta \bigl(\varphi(y) \bigr)|_{y=0}=0.
\]
By the chain rule, we obtain
%
\begin{eqnarray}
\label{laplacian} &&\Delta \bigl(\varphi(y) \bigr)|_{y=0}
\nonumber
\\
&&\qquad=\Delta \bigl(\Psi(0) \bigr)\otimes\Phi_{\Gamma}(z)+2\sum
_{i=1}^{d}\frac
{\partial
(\Psi(0))}{\partial z_{i}}\otimes
\frac{\partial(\Phi(z))}{\partial
z_{i}}%
+\Psi(0)\otimes\Delta \bigl(\Phi_{\Gamma}(z)
\bigr)
\\
&&\qquad=0.
\nonumber
\end{eqnarray}
Recalling Lemma~\ref{Psi_remark}, we have the following equalities:
%
\begin{eqnarray}
\Psi(0) &=&\mathbf{1} \label{Psi_eqn},
\\
\frac{\partial(\Psi(0))}{\partial z_{i}} &=&e_{i}\qquad\mbox{for }i=1, 2, \ldots, d\quad \mbox{and}
\\
\Delta \bigl(\Psi(0) \bigr) &=&\sum_{i=1}^{d}e_{i}
\otimes e_{i}.
\end{eqnarray}
Thus after substituting these into (\ref{laplacian}) and rearranging
the equation, we finally obtain
%
\begin{equation}
\bigtriangleup\Phi_{\Gamma}(z)=- \Biggl(\sum
_{i=1}^{d}e_{i}\otimes e_{i}
\Biggr)\otimes \Phi_{\Gamma}(z)-2\sum_{i=1}^{d}
\biggl( e_{i}\otimes\frac{\partial
\Phi
_{\Gamma}(z)}{\partial z_{i}} \biggr).
\end{equation}
The boundary condition is proved in Lemma~\ref{boundaryCondition}.
Moreover by the definition of the signature, it is obvious that $\rho
_{0}(\Phi(z))=1$
and
\[
\rho_{1} \bigl(\Phi(z) \bigr)=\sum_{i=1}^{d}
\mathbb{E}^{z} \biggl[\int_{0}^{\tau
_{\Gamma}}\circ
dB_{t}^{(i)} \biggr]e_{i}=\sum
_{i=1}^{d}\mathbb{E}%
^{z}
\bigl[B_{\tau_{\Gamma}}^{(i)}-z^{(i)} \bigr]e_{i}=0,
\]
since Brownian motion is a martingale.
\end{pf}

\begin{remark}
In the proof of Theorem~\ref{PDE_theorem}, the essential condition for
the domain $\Gamma$ is actually the
well definedness and the componentwise differentiability of the
function $\Phi_{\Gamma}$, for which the boundedness of the domain is a
sufficient, but not a necessary condition.
\end{remark}

The following corollary is an equivalent statement of Theorem~\ref
{PDE_theorem}, which states how to use the PDE system to solve each
term of the
expected signature recursively.

\begin{corollary}
\label{PDE_corollary} Let $\Gamma$ be a bounded domain. For every $n\in
\mathbb{N}$ and $n\geq2$, the $n$th term of $\Phi_{\Gamma}$ satisfies
the following PDE:
%
\begin{eqnarray}
\label{PDE}&& \Delta \bigl(\rho_{n} \bigl(\Phi_{\Gamma}(z)
\bigr) \bigr)
\nonumber
\\[-8pt]
\\[-8pt]
\nonumber
&&\qquad=-2\sum_{i=1}^{d}e_{i}
\otimes \frac{%
\partial\rho_{n-1}(\Phi_{\Gamma}(z))}{\partial z_{i}}- \Biggl( \sum_{i=1}^{d}e_{i}
\otimes e_{i} \Biggr) \otimes\rho_{n-2} \bigl(\Phi
_{\Gamma
}(z) \bigr),
\end{eqnarray}
with the boundary condition that for each $z \in\partial\Gamma$,
\begin{eqnarray*}
\lim_{t\uparrow\tau_{\Gamma}}\rho_{n} \bigl(\Phi_{\Gamma}(B_{t})
\bigr)=\cases{ %
 0, & \quad$\mbox{if $n \geq1$;}$ \vspace*{2pt}
\cr
1, &
\quad $\mbox{if $n = 0$.}$ }
\end{eqnarray*}
Moreover $\rho_{0}(\Phi_{\Gamma}(z))=1,\rho_{1}(\Phi_{\Gamma
}(z))=0,\forall z\in\bar{\Gamma}$.
\end{corollary}

\begin{remark}
From the corollary it is important to notice that if we have computed
the $(n-1)$th and $(n-2)$th term of expected signature, the right-hand
side of the PDE of the $n$th term is known. There is a stochastic
representation to the solution to this generalized Poisson equation
problem with the appropriate Dirichlet boundary condition (Lemma~\ref
{boundaryCondition}), and as the solutions are all bounded, this
implies that from the PDE and the boundary condition we can resolve all
the terms of expected signature recursively \cite{oksendal2003stochastic}. The result is summarized in the following
theorem. 
\end{remark}

%
%
%

\begin{theorem}
Let $\Gamma$ be a bounded domain. For each $n\in\mathbb{N}$,
\[
\varphi_{n}(z):=- \Biggl( \sum_{i=1}^{d}e_{i}
\otimes e_{i} \Biggr) \otimes\rho _{n-2} \bigl(
\Phi_{\Gamma}(z) \bigr)-2 \Biggl( \sum_{i=1}^{d}e_{i}
\otimes\frac
{\partial
\rho_{n-1}(\Phi_{\Gamma}(z))}{\partial z_{i}} \Biggr).
\]
Suppose for each $n \in\mathbb{N}$,
\[
\mathbb{E}^{z} \biggl[\int_{0}^{\tau_{\Gamma}}
\varphi_{n}(B_{t})\,dt \biggr]<\infty\qquad\forall z\in
\Gamma.
\]
%
Then it follows that
\[
\rho_{n} \bigl(\Phi_{\Gamma}(z) \bigr)=\mathbb{E}^{z}
\biggl[\int_{0}^{\tau_{\Gamma
}}%
\varphi_{n}(B_{t})\,dt \biggr].
\]
\end{theorem}

\subsection{A concrete example: Brownian motion in the unit ball}
Recall that $E = \mathbb{R}^{d}$, and $\mathbb{D}$ denotes the open
unit ball in $E$ centered at the origin. In this subsection, we will
discuss the expected signature of $d$-dimensional Brownian motion
starting at $z \in\mathbb{D}$ upon the first exit time from $\mathbb
{D}$, denoted by $\Phi_{\mathbb{D}}(z)$ as before.

Let us start with the one-dimensional case, where $\Phi_{\mathbb{D}}$
can be solved explicitly. For $d=1$, $\mathbb{D}$ is just the interval
$(-1,1)$. After an easy computation, the probability of hitting $1$ at
the exit time is
\[
\mathbb{P}^{x}(B_{\tau_{(-1,1)}}=1)=\frac{x+1}{2},
\]
and it follows immediately that
\begin{eqnarray*}
\rho_{n}(\Phi_{(-1,1)}) &=&\frac{(1-x)^{n}}{n!}
\mathbb{P}%
^{x}(B_{\tau_{(-1,1)}}=1)+\frac{(-1-x)^{n}}{n!}
\mathbb{P}%
^{x}(B_{\tau_{( -1,1)}}=-1)
\\
&=&\frac{1}{2n!} \bigl(1-x^{2} \bigr) \bigl( (1-x)^{n-1}-(-1-x)^{n-1}
\bigr).
\end{eqnarray*}
It is easy to verify that $\Phi_{(-1,1)}(x)$ satisfies the following
ODE, which is consistent with Corollary~\ref{PDE_corollary}:
\[
\frac{d^{2}(\rho_{n}(\Phi_{( -1,1)}(x)))}{dx^{2}}=-\rho_{n-1} \bigl(\Phi _{(-1,1)}(x) \bigr)-2
\frac{d(\rho_{n-2}(\Phi_{(-1,1)}(x)))}{dx}\qquad \forall n\geq2,
\]
with the boundary condition
\[
\rho_{n} \bigl(\Phi_{(-1,1)}(x) \bigr)=0\qquad\mbox{if }x=\pm1
\ \forall n\geq2
\]
and the initial condition
\begin{eqnarray*}
\rho_{0} \bigl(\Phi_{(-1,1)}(x) \bigr) &=&1\quad \mbox{and}
\\
\rho_{1} \bigl(\Phi_{(-1,1)}(x) \bigr) &=&0\qquad \forall x
\in[-1,1].
\end{eqnarray*}
%

After computing $\Phi_{(-1, 1)}$, we are going to show that the
expected signature of the $d$-dimensional
Brownian motion upon the first exit time from the unit ball is in
polynomial form using Corollary~\ref{PDE_corollary}.
To be precise, we introduce the definition of a polynomial form for
a function mapping from a domain $\Gamma\subset E$ to $E^{\otimes n}$.

\begin{definition}
Let $f_{n}$ be a map from $\Gamma$ to $E^{\otimes n}$, where $\Gamma$
is a domain in $E$ endowed
with the canonical basis. Let $g$ be a polynomial in $E$. We say that
$f_{n}$ is in polynomial form with
a factor $g$ if for every index $I$ with length $n$, $\pi^{I}\circ
f_{n}$ is~a~polynomial with a common factor $g$. The degree
of the polynomial form $f_n$ is defined as the maximum of degrees of
all $\pi^{I} \circ f_{n}$ over all the indexes $I$ of length~$n$.
\end{definition}

To show that for every $n \in\mathbb{N}$, $\rho_{n} \circ\Phi_{\mathbb
{D}}$ is in polynomial form, which is summarized in Theorem~\ref
{ES_unit_disk_poly}, we need the following auxiliary lemma.

\begin{lemma}
\label{polynomial} For any given polynomial of degree $n$ denoted by $f\dvtx \mathbb{D} \rightarrow\mathbb{R}$, the solution to the PDE
\[
\cases{ %
\Delta F(z) = f(z), &\quad $\mbox{if $z \in \mathbb{D}$;}$
\vspace*{2pt}
\cr
F(z) = 0, &\quad $\mbox{if $|z| = 1$}$}
\]
exists and it is unique. Moreover it is a polynomial of degree $n+2$
with a
factor of $(1-\sum^{d}_{i=1}z_{i}^{2})$.
\end{lemma}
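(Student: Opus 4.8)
The plan is to solve the problem by an explicit ansatz rather than relying only on the abstract Green's-function representation of Theorem \ref{PDE_unique_solution}. Since the boundary condition $F=0$ on $|z|=1$ has to be built in, I would look for a solution of the form $F(z)=(1-|z|^{2})Q(z)$ with $Q$ a polynomial of degree at most $n$. Such an $F$ automatically satisfies $F\equiv\mathbf{0}$ on $\partial\mathbb{D}$ and, if it can be realized, delivers the degree bound $n+2$ and the stated common factor $(1-|z|^{2})$ for free. The whole lemma then reduces to solving a single linear equation for $Q$ inside the finite-dimensional space $P_{n}$ of real polynomials of degree $\le n$.

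First I would compute the Laplacian of the ansatz. Writing $|z|^{2}=z_{1}^{2}+z_{2}^{2}$ and using $\Delta(1-|z|^{2})=-4$ together with $\nabla(1-|z|^{2})=(-2z_{1},-2z_{2})$, the product rule gives
\[
\Delta\big[(1-|z|^{2})Q\big]=-4Q-4\sum_{i=1}^{2}z_{i}\frac{\partial Q}{\partial z_{i}}+(1-|z|^{2})\Delta Q=:(LQ)(z).
\]
A degree count shows that each of the three terms on the right has degree at most $\deg Q$, so $L$ is a linear endomorphism of $P_{n}$. Thus solving $\Delta F=f$ with $F=(1-|z|^{2})Q$ is exactly the equation $LQ=f$ in $P_{n}$, and the lemma follows once $L$ is shown to be a bijection of $P_{n}$.

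Because $P_{n}$ is finite-dimensional, it suffices to prove that $L$ is injective, and here I would invoke the maximum principle. If $LQ=0$ then $F=(1-|z|^{2})Q$ is a polynomial, hence harmonic on $\mathbb{D}$ and continuous on $\bar{\mathbb{D}}$, and it vanishes on $\partial\mathbb{D}$; the maximum principle forces $F\equiv 0$ on $\bar{\mathbb{D}}$, and since $1-|z|^{2}>0$ on the open disk this gives $Q\equiv 0$ there, so $Q$ is the zero polynomial. Hence $L$ is injective, therefore bijective, so for the given $f\in P_{n}$ there is a unique $Q\in P_{n}$ with $LQ=f$, and $F=(1-|z|^{2})Q$ is a polynomial solution of degree $\le n+2$ carrying the factor $(1-|z|^{2})$. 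Uniqueness of the Dirichlet problem itself is again the maximum principle applied to the difference of two solutions (a harmonic function with zero boundary data), so this polynomial is in fact \emph{the} unique solution.

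The one step that genuinely needs an argument is the bijectivity of $L$; everything else is bookkeeping, and I expect this to be the main obstacle, although the maximum-principle argument above dispatches it cleanly. As an alternative that also produces an explicit inversion (and meshes with the grade-by-grade computation of the expected signature), I would diagonalize the top-order part of $L$ through the decomposition of a homogeneous polynomial into solid harmonics $|z|^{2j}H_{m}$: a short calculation gives $\Delta(|z|^{2j}H_{m})=4j(j+m)|z|^{2j-2}H_{m}$, so on a homogeneous piece of degree $k=m+2j$ the degree-preserving part of $L$ acts as multiplication by $-4(k+1)-4j(k-j)$, which is strictly negative and hence never zero. A downward induction on the homogeneous components of $Q$ then re-proves injectivity and lets one compute $Q$ explicitly grade by grade.
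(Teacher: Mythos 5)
Your proposal is correct and follows essentially the same route as the paper's first proof: the paper likewise defines the operator $L(g)(z)=\Delta\bigl((1-z_{1}^{2}-z_{2}^{2})g(z)\bigr)$ on $P[n]$, shows $\ker L=\{0\}$ via the maximum principle applied to the harmonic polynomial $(1-|z|^{2})g$ vanishing on $\partial\mathbb{D}$, and concludes bijectivity from finite-dimensionality (rank--nullity). Your closing aside—diagonalizing the degree-preserving part of $L$ on solid harmonics $|z|^{2j}H_{m}$ with eigenvalue $-4(k+1)-4j(j+m)$—is a cleaner substitute for the paper's second, algorithmic proof, which instead works in the monomial basis and inverts the resulting matrix $M_{n}$ by strict diagonal dominance and the Gershgorin circle theorem.
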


\begin{pf}
For any fixed $n \in\mathbb{N}$, let us denote the space of the polynomials
with degree no more than $n$ by $P[n]$. Define the linear operator $L$ which
maps $P[n]$ to $P[n]$ as follows:
\[
L(f) (z) := \Delta \Biggl( \Biggl(1-\sum^{d}
_{i=1}z_{i}^{2} \Biggr)f(z) \Biggr),
\]
where $f \in P[n]$.

We are going to show that $\operatorname{dim}(\operatorname{Im}(L))=\operatorname{dim}(P[n])$. Let us consider
$\operatorname{Ker}(L) =
\{ g \in P[n]| L(g) = 0 \}$. For every $g \in \operatorname{Ker}(L)$, let $G$ be
$G(z):=(1-%
\sum^{d}_{i=1}z_{i}^{2})g(z)$. Then $G \in P[n+2]$ and
satisfies the PDE problem
\[
\Delta G(z) = 0\qquad \mbox{if }z \in\mathbb{D},
\]
and the boundary condition
\[
G(z) = 0\qquad \mbox{if } |z| = 1.
\]
By the strong maximum principle $G(z)=0$ is the unique solution to this PDE
problem, so we have $\operatorname{Ker}(L)=\{0\}$ and $\operatorname{dim}(\operatorname{Ker}(L))= 0$. Since $L$ is a
linear operator, by rank-nullity theorem, it is easy to see that $\operatorname{Im}(L)
\subseteq P[n]$. Then we can claim that $L$ is a bijection and $\operatorname{Im}(L) =
P[n]$, which means that for every $f \in P[n]$, there exists a unique
polynomial $g \in P[n]$ such that
\[
L(g) = f.
\]
Equivalently for every $f \in P[n]$, there exits a unique polynomial
$F\in P[n+2]$ such that $F$ is the unique solution to the following PDE:
\[
\cases{ %
\Delta F(z) = f(z), &\quad $\mbox{if $z \in \mathbb{D}$;}$
\vspace*{2pt}
\cr
F(z) = 0, &\quad $\mbox{if $|z| = 1$,}$}
\]
and moreover $F(z)$ has a factor $(1-\sum^{d}_{i=1}z_{i}^{2})$.
\end{pf}

The following proof is an alternative way to prove Lemma~\ref
{polynomial} restricted for the two-dimensional case, but it gives us
an algorithm to compute $F$ explicitly.

\begin{pf*}{Proof of Lemma \ref{polynomial}}
We adopt the notation of $P[n]$ and the linear operator $L$ used in the
above proof. To prove the lemma, it is equivalent to prove that for
$f\in
P[n]$ with $n\geq0$, there exists a homogeneous polynomial denoted by $
g_{n} $ and a polynomial $R_{n}$ which is a polynomial of degree strictly
less than $n$ if $n\geq1$ and $R_{0}=0$ such that
\[
L(g_{n})=f^{\ast}+R_{n},
\]
where $f^{\ast}$ is the leading term of $f$.

For $n=0$, each polynomial $f\in P[n]$ is a constant function, that is,
\mbox{$f(z)=c$} $\forall z=(z_{1},z_{2})\in\mathbb{D}$. Then
$g(z):=-\frac{c}{4}$
satisfies
%
\begin{equation}
\Delta \Biggl( \Biggl(1-\sum^{2}
_{i=1}z_{i}^{2} \Biggr)g(z) \Biggr)=f(z)=c.
\end{equation}
It is easy to show that the solution to this PDE is unique. Thus the
statement is true for $n=0$.

For $n\geq1$, since $g_{n}$ is a homogeneous polynomial of degree $n$, we
have
\[
L(g_{n}) (z)=-4(n+1)g_{n}(z)+ \Biggl(1-\sum
_{i=1}^{2}z_{i}^{2} \Biggr)
\Delta \bigl(g_{n}(z) \bigr).
\]
The leading term of $L(g_{n})(z)$ is
\[
-4(n+1)g_{n}(z)-\sum_{i=1}^{2}z_{i}^{2}
\Delta \bigl(g_{n}(z) \bigr),
\]
which should match $f^{\ast}$.

Let us write $g_{n}$ in the form
\[
g_{n}(z)=\sum_{j=0}^{n}a_{j}z_{1}^{j}z_{2}^{n-j}
\]
and $f^{\ast}$ in the form
\[
f^{\ast}(z)=\sum_{j=0}^{n}b_{j}z_{1}^{j}z_{2}^{n-j}.
\]
Then
\begin{eqnarray*}
&&-4(n+1)g_{n}(z)-\sum_{i=1}^{2}z_{i}^{2}
\Delta \bigl(g_{n}(z) \bigr)
\\
&&\qquad=\sum_{j=0}^{n}
\bigl[-4(n+1)-j(j-1)1_{\{2\leq j\leq n\}}\\
&&\hspace*{48pt}{}-(n-j) (n-j-1)1_{\{
0\leq
j\leq n-2\}}
\bigr]a_{j}z_{1}^{j}z_{2}^{n-j}
\\
&&\qquad\quad-\sum_{j=0}^{n-2}(j+2)
(j+1)a_{j+2}z_{1}^{j}z_{2}^{n-j}\\
&&\qquad\quad{}-%
\sum_{j=2}^{n}(n-j+2) (n-j+1)a_{j-2}z_{1}^{j}z_{2}^{n-j}.
\end{eqnarray*}
Whether there exists $g_{n}$ such that the leading term of $\Delta((1-%
\sum^{2}_{i=1}z_{i}^{2})g_{n}(z))$ is $f^{\ast}(z)$
depends on whether the following linear equation has solution or not:
\begin{eqnarray*}
&& \bigl[-4(n+1)-j(j-1)1_{\{2\leq j\leq n\}}-(n-j) (n-j-1)1_{\{0\leq j\leq
n-2\}}
\bigr]a_{j}
\\
&&\quad{}-(j+2) (j+1)a_{j+2}1_{\{0\leq j\leq n-2\}}-a_{j-2}(n-j+2)
(n-j+1)1_{\{
2\leq
j\leq n\}}\\
&&\qquad=b_{j}
\end{eqnarray*}
for $j=0,1,\ldots,n$, or equivalently in the matrix form
%
\begin{equation}
M_{n}\vec{a}=\vec{b}, \label{linear_equation}
\end{equation}
where $\fontsize{8.36}{10.36}{\selectfont\vec{a}=\pmatrix{
a_{0} \cr
a_{1} \cr
\dvtx \cr
a_{n}} }$, $\fontsize{8.36}{10.36}{\selectfont\vec{b}=\pmatrix{
b_{0} \cr
b_{1} \cr
\dvtx \cr
b_{n}}}$, and $M_{n}$ is a $(n+1)\times(n+1)$ matrix which is defined as
below:
\[
\cases{ %
M_{n}(j,j)= \bigl[-4(n+1)-(n-j) (n-j-1) \bigr], \vspace*{2pt}\cr
\hspace*{63pt}\qquad\mbox{if $j < 2$;} \vspace*{2pt}
\cr
M_{n}(j,j)=
\bigl[-4(n+1)-j(j-1)-(n-j) (n-j-1) \bigr], \vspace*{2pt}\cr
\hspace*{63pt}\qquad \mbox{if $j \geq2, j \leq
n-2$;} \vspace*{2pt}
\cr
M_{n}(j,j)= \bigl[-4(n+1)-j(j-1) \bigr], \vspace*{2pt}\cr
\hspace*{63pt}\qquad \mbox{if $j > n-2, j \leq n$;} \vspace*{2pt}
\cr
M_{n}(j,j+2)=-(j+2)
(j+1), \vspace*{2pt}\cr
\hspace*{63pt}\qquad\mbox{if $ j \leq n-2$;} \vspace*{2pt}
\cr
M_{n}(j,j-2)=-(n-j+2)
(n-j+1), \vspace*{2pt}\cr
\hspace*{63pt}\qquad \mbox{if $j \geq2$;} \vspace*{2pt}
\cr
M_{n}(i,j)=0,
\qquad \mbox{otherwise.}}
\]
The final step is to prove that $M_{n}$ is invertible. It is easily verified
that $M_{n}$ is strictly diagonally dominant, since we have
\[
\bigl|M_{n}(j,j) \bigr|-\mathop{\sum_{i=0}}_{i\neq j}^{n}
\bigl|M_{n}(i,j) \bigr|=4(n+1)>0.
\]
By the Gershgorin's theorem \cite{GershgorinThm}, a strictly
diagonally dominant matrix is nonsingular. So $M_{n}$ is invertible, and
there exists a unique solution $\vec{a}$ to equation (\ref{linear_equation}),
\[
\vec{a}=M_{n}^{-1}\vec{b}
\]
that is, there exists a unique $g_{n}$ such that the leading term of
$\Delta((1-\sum^{2}_{i=1}z_{i}^{2})g_{n}(z))$
equals to $f^{\ast}$%
. By induction, we conclude that for every polynomial $f$ of degree $n$,
there exists a unique polynomial $g$ of degree $n$ such that
\[
L(g) (z)=f(z).
\]
\upqed\end{pf*}


\begin{theorem}\label{ES_unit_disk_poly}
For each $n\in\mathbb{N}^{+}$, $\rho_{n} \circ\Phi_{\mathbb{D}}(z)$
is in polynomial
form of degree no more than n with a factor $(1-\sum^{d}_
{i=1}z_{i}^{2})$.
\end{theorem}

\begin{pf}
We prove this by induction.

For $n=1$, $\rho_{1}(\Phi_{\mathbb{D}}(z))=0$ and trivially has a
factor of $(1-\sum^{d}_{i=1}z_{i}^{2})$.

For $n=2$, by Corollary~\ref{PDE_corollary} we have that $\rho_{2}(\Phi
_{\mathbb{D}} (z))$
satisfies the following PDE:
%
\begin{equation}
\label{Phi_2_PDE} \cases{ %
\displaystyle \Delta\rho_{2} \bigl(
\Phi_{\mathbb{D}} (z) \bigr)=-\sum^{d}
_{i=1}e_{i} \otimes e_{i}, & \quad $\mbox{if $z
\in \mathbb{D}$;}$ \vspace*{2pt}
\cr
\rho_{2} \bigl(\Phi_{\mathbb{D}}
(z) \bigr)=0, &\quad $\mbox{if $|z|=1$.}$}
\end{equation}
By Lemma~\ref{polynomial},
\[
\rho_{2} \bigl(\Phi_{\mathbb{D}}(z) \bigr)=\sum
_{i=1}^{d}\frac{1}{2d} \Biggl(1-\sum
^ {d}_{j=1}z_{j}^{2}
\Biggr)e_{i} \otimes e_{i}
\]
is the unique solution to (\ref{Phi_2_PDE}). So our statement is true
for $n=1,2$.

Suppose that the statement is true for every $n<N$. We are going to
prove that it is
true for $n=N$.

By Corollary~\ref{PDE_corollary}, we obtain that for every $n\geq2$,
\[
\bigtriangleup\rho_{n} \bigl(\Phi_{\mathbb{D}}(z) \bigr)=- \Biggl(
\sum_{i=1}^{d}e_{i}\otimes
e_{i} \Biggr) \otimes\rho_{n-2} \bigl(\Phi_{\mathbb{D}}(z)
\bigr)-2\sum_{i=1}^{d} \biggl(
e_{i}\otimes \frac{\partial\rho_{n-1}(\Phi_{\mathbb{D}}(z))}{\partial z_{i}} \biggr).
\]
By the induction hypothesis, it is easy to show that the right-hand
side should be in polynomial form of degree no more than $n-2$. Using
Lemma~\ref{polynomial}, each $\pi^{I}\circ\Phi_{\mathbb{D}}$ with
$|I|=n$ should be a polynomial of degree no more than $n$ with a factor
of $1-\sum^{d}_{i=1}z_{i}^{2}$, so $\rho_{n}(\Phi
_{\mathbb{D}})$ is in polynomial form of
degree no more than $n$ with a common factor $1-\sum^{d}_
{i=1}z_{i}^{2}$. Now our proof is complete.
\end{pf}

%
\begin{remark}
It is natural to guess that $\Phi_{\mathbb{D}}(z)$ has a common factor
$1- \vert z\vert^{2}$, since it would automatically satisfy the
boundary condition that
\[
\Phi(z) = \mathbf{1}\qquad \forall\vert z \vert= 1.
\]
\end{remark}

In the last part of this section, we give the following truncated
expected signature of two-dimensional Brownian motion upon the
first exit time from the unit disk up to degree $4$:
\begin{eqnarray*}
\rho_{2} \bigl(\Phi_{\mathbb{D}}(z) \bigr)&=& \frac{1}{4}
\Biggl(1- \sum_{i=1}^{2}z_{i}^{2}
\Biggr) \Biggl(\sum_{i=1}^{2}e_{i}
\otimes e_{i} \Biggr),
\\
\rho_{3} \bigl(\Phi_{\mathbb{D}}(z) \bigr) &=& \Biggl(1- \sum
_{i=1}^{2}z_{i}^{2}
\Biggr) \Biggl(-\sum_{i=1}^{2}
\frac{1}{8%
}z_{i}e_{i} \Biggr)\otimes \Biggl(\sum
_{i=1}^{2}e_{i}\otimes
e_{i} \Biggr) \qquad\mbox{and}
\\
\rho_{4} \bigl(\Phi_{\mathbb{D}}(z) \bigr)&=& \Biggl(1-\sum
_{i=1}^{2}z_{i}^{2}
\Biggr) \\
&&{}\times\biggl(D_{1}(z)e_{1}\otimes e_{1}+
\frac{z_{1}z_{2}}{24}(e_{1}\otimes e_{2} + e_{2}
\otimes e_{1})+D_{2}(z)e_{2}\otimes e_{2}
\biggr)
\\
&&{} \otimes \Biggl( \sum_{i=1}^{2}
e_{i}\otimes e_{i} \Biggr),
\end{eqnarray*}
where
\begin{eqnarray*}
D_{1}(z) &=& \tfrac{7}{192}z_{1}^{2}-
\tfrac{1}{192}z_{2}^{2}+\tfrac
{1}{64},
\\
D_{2}(z) &=& \tfrac{7}{192}z_{2}^{2}-
\tfrac{1}{192}z_{1}^{2}+\tfrac{1}{64}.
\end{eqnarray*}
%
\subsection{The geometric bounds for \texorpdfstring{$\Phi_{\Gamma}$}{$Phi_{Gamma}$}}\label{GeoBoundsForPhi}
In this subsection, we aim to show that under certain smoothness
condition of a bounded domain $\Gamma$, each term of $\Phi_{\Gamma}$ is
geometrically bounded. In order to do so, we start with estimating the
upper bounds for $W^{m, 2}$ norm of $\rho_{n}\circ\Phi_{\Gamma}$,
using our PDE theorem and then show that $\rho_{n}(\Phi_{\Gamma})(z)$
is geometrically bounded by the Sobolev theorem.

\subsubsection{$W^{m,2}$ bounds for \texorpdfstring{$\rho_{n} \circ\Phi_{\Gamma}$}{$rho_{n}circ Phi_{Gamma}$}}

\begin{lemma}\label{lemma_pde}
Let $\Gamma$ be a bounded domain of class $C^{m}$ in $E = \mathbb
{R}^{d}$, where $m = \lfloor\frac{d}{2}\rfloor+1$. Then there exists a
constant $C$ only depending on $\Gamma$ and $d$, such that for every
positive integer $n \geq2$,
%
\begin{equation}
\label{inequality1} \bigl\Vert\rho_{n}(\Phi_{\Gamma})
\bigr\Vert_{W^{m,2}
(\Gamma)} \leq C \bigl( \bigl\Vert\rho_{n-1}(
\Phi_{\Gamma}) \bigr\Vert_{W^{m,2}(\Gamma)} + \bigl\Vert\rho_{n-2}(
\Phi_{\Gamma}) \bigr\Vert_{W^{m,2}(\Gamma)} \bigr).
\end{equation}
\end{lemma}

\begin{pf}
Since $\Gamma$ is a bounded domain of class $C^{m}$, according to
Theorem~\ref{boundaryRegularity} there exists a constant $C_{1}$
depending only on $\Gamma$ and $d$, independent of $\rho_{n}(\Phi
_{\Gamma})$, such that
\[
\bigl\Vert\rho_{n}(\Phi_{\Gamma}) \bigr\Vert_{W^{m,2}(\Gamma)} \leq
C_{1} \bigl\Vert\Delta\rho_{n}(\Phi_{\Gamma}) \bigr\Vert
_{W^{m-2,2}(\Gamma)}.
\]
Recall the PDE of $\rho_{n}(\Phi_{\Gamma})$, that is,
%
\begin{eqnarray}
&&\Delta \bigl(\rho_{n} \bigl(\Phi_{\Gamma}(z) \bigr) \bigr)
\nonumber
\\[-8pt]
\\[-8pt]
\nonumber
&&\qquad=-2
\sum_{i=1}^{d}e_{i}\otimes
\frac{\partial\rho_{n-1}(\Phi_{\Gamma}(z))}{\partial z_{i}}- \Biggl(\sum_{i=1}^{d}e_{i}
\otimes e_{i} \Biggr) \otimes\rho_{n-2} \bigl(\Phi
_{\Gamma}(z) \bigr).
\end{eqnarray}
Then it follows immediately that
\begin{eqnarray*}
&& \bigl\Vert\rho_{n}(\Phi_{\Gamma})\bigr \Vert_{W^{m,2}(\Gamma)}
\\
&&\qquad\leq C_{1} \Biggl(\sum_{i=1}^{d}
\biggl\Vert  \frac{%
\partial\rho_{n-1}(\Phi_{\Gamma})}{\partial z_{i}} \biggr\Vert
_{W^{m-2,2}(\Gamma)}+ d \bigl\Vert\rho_{n-2} \bigl(\Phi_{\Gamma
}(z) \bigr)
\bigr\Vert_{W^{m-2,2}(\Gamma)} \Biggr)
\\
&&\qquad\leq C_{1} \Biggl(\sum_{i=1}^{d}
\bigl\Vert  \rho_{n-1}(\Phi _{\Gamma}) \bigr\Vert _{W^{m-1,2}(\Gamma)}+ d \bigl\Vert \rho_{n-2} \bigl(
\Phi_{\Gamma}(z) \bigr)\bigr\Vert_{W^{m-2,2}(\Gamma)} \Biggr).
\end{eqnarray*}
Since for every $f \in W^{k,2}(\Gamma)$, $ \Vert f \Vert
_{W^{k-1,2}(\Gamma)} \leq \Vert f \Vert_{W^{k,2}(\Gamma)}$,
then we choose $C=C_{1}d$, and (\ref{inequality1}) follows.
\end{pf}

%
\begin{lemma}\label{SobNormEs}
Let $\Gamma$ be a bounded domain of class $C^{m}$ in $\mathbb{R}^{d}$,
where $m = \lfloor\frac{d}{2}\rfloor+1$. Then there exists a constant
$C$ only depending on $\Gamma$ and $d$, such that for every integer $n
\geq0$,
\[
\bigl\Vert\rho_{n}(\Phi_{\Gamma}) \bigr\Vert_{W^{m,2}(\Gamma)} \leq \vert
\Gamma\vert^{{1}/{2}} C^{n}.
\]
\end{lemma}

\begin{pf}
By Lemma~\ref{lemma_pde}, there exists a constant $C_{1}>0$ such that
\[
\bigl\Vert\rho_{n}(\Phi_{\Gamma}) \bigr\Vert_{W^{m,2}(\Gamma)} \leq
C_{1} \bigl( \bigl\Vert\rho_{n-1}(\Phi_{\Gamma})\bigr \Vert
_{W^{m,2}(\Gamma)} + \bigl\Vert\rho_{n-2}(\Phi_{\Gamma}) \bigr\Vert
_{W^{m,2}(\Gamma)} \bigr).
\]
We choose $C=C_{1}+1$. Let us prove this statement by induction on $n$.
If $n = 0$, $\rho_{0}(\Phi_{\Gamma}(z)) = 1$ where $z \in\Gamma$, thus
\[
\bigl\Vert\rho_{0}(\Phi_{\Gamma}) \bigr\Vert_{W^{m,2}(\Gamma)} = \vert
\Gamma\vert^{{1}/{2}}\leq\vert\Gamma\vert^{{1}/{2}}C^{0}.
\]
It is obvious that if $n=1$, $\rho_{1}(\Phi_{\Gamma}(z)) = 0$ where
$z \in\Gamma$, thus
\[
\bigl\Vert\rho_{1}(\Phi_{\Gamma}) \bigr\Vert_{W^{m,2}(\Gamma)} = 0 \leq
\vert\Gamma\vert^{{1}/{2}}C^{1}.
\]
By the induction hypothesis, we have that
\begin{eqnarray*}
\Vert\rho_{n}\circ\Phi_{\Gamma}\Vert_{W_{m,2}(\Gamma)} &\leq &
C_{1} \bigl( \vert\Gamma\vert^{{1}/{2}}C^{n-1} + \vert
\Gamma \vert^{{1}/{2}}C^{n-2} \bigr)
\\
&=& \vert\Gamma\vert^{{1}/{2}}C^{n-2}C_{1}(C+1)\leq
\vert\Gamma \vert^{{1}/{2}} C^{n},
\end{eqnarray*}
since $C = C_{1} + 1$ and $C_{1}(C+1) = C^{2}-1\leq C^{2}$.
\end{pf}

%
\subsubsection{The geometric bounds for \texorpdfstring{$\vert\rho_{n}(\Phi_{\Gamma}(x))\vert$}{$|rho_{n}(Phi_{Gamma}(x))|$}}

\begin{theorem}
Let $\Gamma$ be of the class $C^{m}$ and strong Lipschitz in $\mathbb
{R}^{d}$. Then there exists a constant $C$ depending on $d$ and $\Gamma
$, such that for every $x \in\Gamma$, for every positive integer $n$,
\[
\bigl\vert\rho_{n} \bigl(\Phi_{\Gamma}(x) \bigr) \bigr\vert\leq
C^{n}.
\]
\end{theorem}

\begin{pf}
By Lemma~\ref{SobNormEs}, there exists a constant $C_{1}$ such that for
every $n \in\mathbb{N}$,
\[
\bigl\Vert\rho_{n} \circ\Phi_{\Gamma}\bigr\Vert_{W_{m, 2}(\Gamma)} \leq
C_{1}^{n}.
\]
According to Theorem~\ref{SobEmbThm}, there is a constant
$C_{2}(d,\Gamma)$ such that
\[
\bigl\vert\rho_{n}\circ\Phi_{\Gamma}(x) \bigr\vert\leq C_{2}(d,
\Gamma)\Vert\rho_{n}\circ\Phi_{\Gamma} \Vert_{W^{m,2}(\Gamma)}.
\]
%
Let $C =C_{1}\max\{C_{2}, 1\}$. Then it obvious that for every $x
\in\Gamma$,
\[
\bigl\vert\rho_{n}\circ\Phi_{\Gamma}(x) \bigr\vert\leq
C_{2}dC_{1}^{n}\leq C^{n}.
\]
\upqed\end{pf}
%

\begin{remark}
According to Chevyrev \cite{Ilya}, when the expected signature is
compact-like, it determines the law of the signature. This result
directs us to study the decay rates of $\Phi_{\Gamma}$. So far our best
result is that $\Phi_{\Gamma}$ is geometrically bounded, and it
provides insufficient information for us to conclude whether $\Phi
_{\Gamma}$ is compact-like or not. Because the geometric boundedness of
a tensor series does not imply that this tensor series is compact-like.
It would be still unclear even in the simplest case that $\Gamma=
\mathbb{D}$ whether the expected signature of Brownian motion
characterizes the law of the signature of stopped Brownian motion. One
difficulty comes from the tail behavior of higher-order iterated
integrals of Brownian motion; for example, the L\'evy area of the
Brownian motion stopped at the exit time from the disk is shown to have
only exponential tail \cite{NiHaoThesis}. However, the story is not
simply about tail behavior. The interaction between iterated integrals
is clearly of great importance. We can see this by looking at the
one-dimensional Gaussian random variable $X$. The law of $X$ is
certainly determined by its moments. On the other hand, it is known
that the law of $Y :=X^{3}$ is not determined via its moments \cite{berg1988cube}. Nonetheless the joint distribution of $(X, Y)$ is
determined by its moments. This is because one can deduce from the
moments of $(X, Y)$ that the expected value of $(X^{3}-Y)^{2}$ is zero,
hence recovering the algebraic relation $X^{3} = Y$ almost surely.
\end{remark}

\subsection{A discrete analogy: The expected signature of a simple
random walk up to an exit time}
Let $X_{1},X_{2},\ldots $ be independent, identically distributed random
variables on a probability space $(\Omega,\mathcal{F},\mathbf{P})$ taking
values in the integer lattice $\mathbb{Z}^{d}$ with
\[
\mathbf{P}\{X_{j}=e\}=\frac{1}{2d},\qquad|e|=1.
\]
A simple random walk starting at $x\in\mathbb{Z}^{d}$ is a stochastic
process $S_{n}$
indexed by the nonnegative integers, with $S_{0}=x$. We denote by
$\Gamma$
a regular domain of the integer lattice and denote by $\tau_{\Gamma}$ the
first exit time from $\Gamma$; the definition of a regular domain can be
found in \cite{lawler2012intersections}. The process $S_{n}$ can be
viewed as a random
lattice path of step~$n$, so that its iterated integrals are well defined.
Moreover $\Phi_{\Gamma}(x)$ is defined as the expected signature of a simple
random walk starting at $x$ and ending at the first exit time from
$\Gamma$ ($%
\Gamma\subseteq\mathbb{Z}^{d}$ is a finite set). By the
multiplicative property of the
signature and the strong Markov property of the simple random walk, we
have the
following important equation: for every $x \in\Gamma$,
%
\begin{equation}
\Phi_{\Gamma}(x)=\sum_{|e_{j}|=1}
\frac{1}{2d}\exp(e_{j})\otimes\Phi _{\Gamma}(x+e_{j}),
\label{Eqn1}
\end{equation}
where for $j \in\{1, \ldots, d\}$, $e_{j}$ is the unit vector in
$\mathbb{Z}^{d}$ with $j$th component $1$, and for $j \in\{d+1, \ldots,
2d\}$, $e_{j}$ is the unit vector in $\mathbb{Z}^{d}$ with $(j-d)$th
component $-1$.

Rewriting equation (\ref{Eqn1}), we have
\[
\rho_{n} \bigl(\Phi_{\Gamma}(x) \bigr)=\sum
_{|e_{j}|=1}\frac{1}{2d}\sum_{i=0}^{n}
\frac{%
(e_{j})^{\otimes i}}{i!}\otimes\rho_{n-i} \bigl(\Phi_{\Gamma}(x+e_{j})
\bigr).
\]
This is equivalent to
%
\begin{equation}
-\Delta\rho_{n} \bigl(\Phi_{\Gamma}(x) \bigr)=\sum
_{|e_{j}|=1}\frac{1}{2d}%
\sum
_{i=1}^{n}\frac{(e_{j})^{\otimes i}}{i!}\otimes
\rho_{n-i} \bigl(\Phi _{\Gamma}(x+e_{j}) \bigr),
\label{Pde_n_term}
\end{equation}
where $\Delta$ denotes the discrete Laplace operator; that is, for any
function $f\dvtx \Gamma\rightarrow E^{\otimes n}$,
\[
\Delta f(x) = \frac{1}{2d}\sum_{i=1}^{2d}
\bigl(f(x+e_{i})-f(x) \bigr).
\]
It is also easy to verify that for every $x \in\Gamma^{c}$, the
following equation holds:
%
\begin{equation}
\Phi_{\Gamma}(x)=\mathbf{1}=(1,0,0,\ldots),
\end{equation}
where $\Gamma^{c}$ is the complement of $\Gamma$ in $\mathbb{Z}^{d}$.

We summarize our result in the following theorem.

\begin{theorem}
Let $\Gamma\subseteq\mathbb{Z}^{d}$ be a finite set. Then $\Phi
_{\Gamma}\dvtx \mathbb{Z}^{d}\rightarrow T((E))$ satisfies the following
conditions:
\begin{longlist}[(1)]
\item[(1)] $\forall x \in\Gamma^{c}, \Phi_{\Gamma}(x) = \mathbf{1}$;
\item[(2)] $\forall x\in\Gamma, \rho_{0}(\Phi_{\Gamma}(x))=1,\rho
_{1}(\Phi_{\Gamma}(x))=0$;
\item[(3)] $\forall n \geq2$, $\forall x\in\Gamma$,
\[
\Delta\rho_{n} \bigl(\Phi_{\Gamma}(x) \bigr)=-\sum
_{|e_{j}|=1}\frac{1}{2d}%
\sum
_{i=1}^{n}\frac{e_{j}^{\otimes i}}{i!}\otimes
\rho_{n-i} \bigl(\Phi _{\Gamma}(x+e_{j}) \bigr).
\]
\end{longlist}
\end{theorem}

\begin{remark}
Notice that the right-hand side of (\ref{Pde_n_term}) is determined
totally by the truncated expected signature up to degree $n-1$. This indicates
that we can solve $\Phi_{\Gamma}(x)$ recursively just as in the Brownian
motion case. Classical potential theory guarantees that we can
recursively solve a system of finite difference problems to obtain the
whole expected signature of a simple random walk up to the first exit
time.%
\end{remark}

\begin{theorem}
\label{discrete_PDE_thereom} Let $\Gamma\subseteq\mathbb{Z}^{d}$ be a
finite set, $%
F\dvtx \Gamma^{c} \rightarrow\mathbb{R},g\dvtx \Gamma\rightarrow\mathbb{R}$.
Then the unique function $f\dvtx \mathbb{Z}^{d}\rightarrow\mathbb{R}$ satisfying
\begin{eqnarray*}
&&\mathrm{(a)}\quad\Delta f(x)=-g(x),\qquad x\in\Gamma,
\\
&&\mathrm{(b)}\quad f(x)=F(x),\qquad x\in\Gamma^{c},
\end{eqnarray*}
is
\[
f(x)=\mathbb{E}^{x} \Biggl[F(S_{\tau_{\Gamma}})+\sum
_{j=0}^{\tau_{\Gamma}
-1}g(S_{j}) \Biggr].
\]
\end{theorem}

Immediately we have the following theorem in our setting.
%
\begin{theorem}
Let $\Gamma\subseteq\mathbb{Z}^{d}$ be a finite set. Then $\Phi
_{\Gamma}\dvtx \mathbb{Z}^{d}\rightarrow T((E))$ is given as follows:
\begin{longlist}[(1)]
\item[(1)] $\forall x \in\Gamma^{c}, \Phi_{\Gamma}(x) = \mathbf{1}$;
\item[(2)] $\forall x\in\Gamma, \rho_{0}(\Phi_{\Gamma}(x))=1,\rho
_{1}(\Phi_{\Gamma}(x))=0$;
\item[(3)] $\forall n\geq2$, $\forall x \in\Gamma$,
\[
\rho_{n} \bigl(\Phi_{\Gamma}(x) \bigr)=\mathbb{E}^{x}
\Biggl[\sum_{j=0}^{\tau
_{\Gamma}
-1}g_{n}(S_{j})
\Biggr],
\]
where the integration is understood in componentwise sense and
\[
g_{n}(x)=\sum_{|e_{j}|=1}\frac{1}{2d}\sum
_{i=1}^{n}\frac
{(e_{j})^{\otimes i}%
}{i!}\otimes
\rho_{n-i} \bigl(\Phi_{\Gamma}(x+e_{j}) \bigr).
\]
\end{longlist}
\end{theorem}

%

\section*{Acknowledgments}
We thank the referees and the Associate Editor for their
thorough review and highly appreciate their comments and
suggestions, which significantly contributed to improve
the quality of this paper. We would both like to thank Oxford-Man
Institute for their funding.



%


\printaddresses
\end{document}